\newcommand\includeT[1]{\raisebox{-1.7ex}{\includegraphics{HePe-Ts-#1}}}
\newcommand\Ti   {\includeT{1}}
\newcommand\Tii  {\includeT{2}}
\newcommand\Tiii {\includeT{3}}
\newcommand\Tiv  {\includeT{4}}
\newcommand\Tv   {\includeT{5}}
\newcommand\Tvi  {\includeT{6}}
\newcommand\Tvii {\includeT{7}}
\newcommand\Tviii{\includeT{8}}
\newcommand\Tix  {\includeT{9}}
\newcommand\Tx   {\includeT{10}}
\newcommand\Txi  {\includeT{11}}
\newcommand\Txii {\includeT{12}}
\newcommand\Txiii{\includeT{13}}
\newcommand\Txiv {\includeT{14}}
\newcommand\includeD[1]{\raisebox{-0.5\height}{\includegraphics{HePe-Ds-#1}}}
\newcommand\includeV[2]{\begin{tabular}[b]{c}
	\includegraphics[scale=1.5]{HePe-Vs-#1}\\
	$x'\in[#2)$
	\end{tabular}\allowbreak\ignorespaces
}
\newcommand\includeW[2]{\begin{tabular}[b]{c}
	\includegraphics[scale=1.5]{HePe-Ws-#1}\\
	$x'\in[#2)$
	\end{tabular}\allowbreak\ignorespaces
}
\DeclarePairedDelimiterX\set[2]\lbrace\rbrace{#1\bm:#2}
\DeclarePairedDelimiter\qfl\lfloor\rfloor
\DeclarePairedDelimiter\abs\lvert\rvert
\DeclareMathOperator\Fin{Fin} % Fin(beta)
\DeclareMathOperator\Pal{Pal} % palette
\newcommand*\T{\mathcal{T}} % tile
\newcommand*\calP{\mathcal{P}} % patch
\newcommand*\C{\mathbb{C}} % complex numbers
\newcommand*\N{\mathbb{N}} % natural numbers
\newcommand*\R{\mathbb{R}} % real numbers
\newcommand*\Q{\mathbb{Q}} % rational numbers
\newcommand*\Z{\mathbb{Z}} % integer numbers
\newcommand*\conj[1]{\overline{#1}} % complex conjugate
\newcommand*\defined[1]{\emph{#1}} % for defined terms
\newcommand*\ii{\mathrm{i}} % imaginary unit
\newcommand*\Deltax{\Delta^{\!*}} % Delta star with kerning
\newcommand*\Lx{L^{\mkern-1mu*}} % L star with kerning
\newcommand*\eqdef{\coloneqq} % alias
\newcommand*\defeq{\eqqcolon} % alias
\newcommand*\degree{\ensuremath{{}^\circ}} % poor man's substitute for \textdegree because OT1 is used
\def\romanenumi{\def\theenumi{\roman{enumi}}}
\def\Alphenumi{\def\theenumi{\Alph{enumi}}}
\theoremstyle{plain}
\newtheorem{lemma}{Lemma}[section]
\newtheorem{proposition}[lemma]{Proposition}
\newtheorem{theorem}[lemma]{Theorem}
\newtheorem{corollary}[lemma]{Corollary}
\theoremstyle{definition}
\newtheorem{algor}[lemma]{Algorithm}
\newtheorem{example}[lemma]{Example}
\theoremstyle{remark}
\newtheorem{remark}[lemma]{Remark}
\numberwithin{equation}{section}
\begin{document}

\title{Spectral properties of cubic complex Pisot units}

\author[T. Hejda]{Tom\'a\v s Hejda}
\address{Dept.\@ of Mathematics FNSPE, Czech Technical University in Prague, Trojanova~13, Prague 12000, Czech Rep.}
\curraddr{LIAFA, CNRS UMR 7089, Universit\'e Paris Diderot -- Paris 7, Case 7014, 75205 Paris Cedex 13, France}
\email{tohecz@gmail.com}

\author[E. Pelantov\'a]{Edita Pelantov\'a}
\address{Dept.\@ of Mathematics FNSPE, Czech Technical University in Prague, Trojanova~13, Prague 12000, Czech Rep.}
\email{edita.pelantova@fjfi.cvut.cz}

\subjclass[2010]{Primary 11A63, 11K16, 52C23, 52C10; Secondary 11H99, 11-04}

\begin{abstract}
For a real number $\beta>1$, Erd\H os, Jo\'o and Komornik study distances between consecutive points in the set
\[
	X^m(\beta)=\Bigl\{\sum_{j=0}^n a_j \beta^j \bm: n\in\N,\,a_k\in\{0,1,\dots,m\}\Bigr\}
.\]
Pisot numbers play a crucial role for the properties of $X^m(\beta)$.
Following the work of Za\"imi, who considered $X^m(\gamma)$ with $\gamma\in\mathbb{C}\setminus\mathbb{R}$ and $|\gamma|>1$,
 we show that for any non-real $\gamma$ and $m<|\gamma|^2-1$, the set $X^m(\gamma)$ is not relatively dense in the complex plane.

Then we focus on complex Pisot units $\gamma$ with a positive real conjugate $\gamma'$ and $m>|\gamma|^2-1$.
If the number $1/\gamma'$ satisfies Property~(F), we deduce that $X^m(\gamma)$
 is uniformly discrete and relatively dense, i.e., $X^m(\gamma)$ is a Delone set.
Moreover, we present an algorithm for determining two parameters of the Delone set $X^m(\gamma)$
 which are analogous to minimal and maximal distances in the real case $X^m(\beta)$.
For $\gamma$ satisfying $\gamma^3+\gamma^2+\gamma-1=0$, explicit formulas for the two parameters are given.
\end{abstract}

\maketitle

\section{Introduction}\label{sect:intro}

In \cite{erdos_joo_komornik_1990,erdos_joo_komornik_1998}, Erd\H os, Jo\'o and Komornik studied the set
\[
	X^m(\beta)\eqdef \set[\Big]{\sum_{j=0}^n a_j\beta^j}{n\in\N,\,a_k\in\{0,1,\dots,m\}}
,\]
 where $\beta>1$.
Since this set has no accumulation points, we can find an increasing sequence
\[
	0=x_0<x_1<x_2<\dotsb<x_k<\dotsb
\]
such that $X^m(\beta)=\set{x_k}{k\in\N}$.
The research of Erd\H os et al.\@ aims to describe distances between consecutive points of $X^m(\beta)$, i.e.,
 the sequence $(x_{k+1}-x_k)_{k\in\N}$.
The properties of this sequence depend on the value $m\in\N$.
It is easy to show that when $m\geq\beta-1$, we have $x_{k+1}-x_k\leq1$ for all $k\geq0$;
 and when $m<\beta-1$, the distances $x_{k+1}-x_k$ can be arbitrarily large.

Also, many properties of $X^m(\beta)$ depend on $\beta$ being a \defined{Pisot number}
 (i.e., an algebraic integer $>1$ such that all its Galois conjugates are $<1$ in modulus).
Bugeaud~\cite{bugeaud_1996} showed that
\[
	\ell_m(\beta)\eqdef \liminf_{k\to\infty} (x_{k+1}-x_k)>0
\quad\text{for all $m\in\N$}
\]
 if and only if base $\beta$ is a Pisot number.
Recently, Feng \cite{feng_2013} proved a stronger result that the bound $\beta-1$ for the alphabet size is crucial.
In particular, $\ell_m(\beta)=0$ if and only if $m>\beta-1$ and $\beta$ is not a Pisot number.

Therefore, the case $\beta$ Pisot and $m>\beta-1$ has been further studied.
From the approximation property of Pisot numbers we know that for a fixed $\beta$ and $m>\beta-1$
 the sequence $(x_{k+1}-x_k)$ takes only finitely many values.
Feng and~Wen~\cite{feng_wen_2002} used this fact to show that the sequence of distances $(x_{k+1}-x_k)$ is substitutive:
 roughly speaking, it can be generated by a system of rewriting rules over a finite alphabet.
This allows us, for a fixed $\beta$ and $m$, to determine values of all distances $(x_{k+1}-x_k)$ and subsequently the value of $\ell_m(\beta)$.
An algorithm for obtaining the minimal distance $\ell_m(\beta)$ for certain $\beta$
 was also proposed by Borwein and Hare~\cite{borwein_hare_2002}.

The first formula which determines the value of $\ell_m(\beta)$ for all $m$ at once appeared
 in 2000: Komornik, Loreti and Pedicini \cite{komornik_loreti_pedicini_2000} studied the base Golden mean.
The generalization of this result to all quadratic Pisot units was provided by Takao Komatsu \cite{komatsu_2002} in 2002.

To the best of our knowledge, the value of
\begin{equation}\label{eq:Lmb}
	L_m(\beta)\eqdef \limsup_{k\to\infty} (x_{k+1}-x_k)
\end{equation}
 for all $m$ is only known for the base Golden mean, due to Borwein and Hare~\cite{borwein_hare_2003}.
Of course, for a given $m$, the value of $L_m(\beta)$ can be computed using \cite{feng_wen_2002}.

Za\"imi \cite{zaimi_2004} was interested in a complementary question:
Fix the alphabet size, i.e., the maximal digit $m$, and look for the extreme values of $\ell_m(\beta)$,
 where $\beta$ runs through Pisot numbers in $(m,m+1)$.
He showed that $\ell_m(\beta)$ is maximized for certain quadratic Pisot numbers.

Besides that, Za\"imi started the study of the set $X^m(\gamma)$,
 where $\gamma$ is a complex number $>1$ in modulus, and he put
\begin{equation}\label{eq:ell}
	\ell_m(\gamma)\eqdef \inf\set[\big]{\abs{x-y}}{x,y\in X^m(\gamma),\,x\neq y}
.\end{equation}
He proved an analogous result to the one for real bases by Bugeaud, namely that
$\ell_m(\gamma)>0$ for all $m$ if and only if $\gamma$ is a complex Pisot number,
 where a \defined{complex Pisot number} is defined as a non-real algebraic
 integer $>1$ in modulus whose Galois conjugates except its complex conjugate are $<1$ in modulus.

In the complex plane, $\ell_m(\gamma)$ and $L_m(\gamma)$ cannot be defined as simply as in the real case since we have
 no natural ordering of the set $X^m(\gamma)$ in $\C$.
To overcome this difficulty, we were inspired by notions used in the definition of Delone sets.
We say that a set $\Sigma$ is:
\begin{itemize}
\item \defined{uniformly discrete} if there exists $d>0$ such that $\abs{x-y}\geq d$ for all distinct $x,y\in\Sigma$;
\item \defined{relatively dense} if there exists $D>0$ such that for all $x\in\C$ the closed ball
 $B(x,D/2)=\set{z\in\C}{\abs{z-x}\leq D/2}$ contains a point from $\Sigma$.
\end{itemize}
A set that is both uniformly discrete and relatively dense is called a \defined{Delone set}.

Clearly, if $\ell_m(\gamma)$ as given by \eqref{eq:ell} is positive, then $X^m(\gamma)$ is uniformly discrete and $\ell_m(\gamma)$
 is the maximal $d$ in the definition of uniform discreteness.
Hence $X^m(\gamma)$ is uniformly discrete for all $m$, when $\gamma$ is a complex Pisot number.

Let us define
\[
	L_m(\gamma)\eqdef \inf\set[\big]{D>0}{B(x,D/2)\cap X^m(\gamma)\neq\emptyset \text{ for all $x\in\C$}}
.\]
In particular, $L_m(\gamma)=+\infty$ if and only if $X^m(\gamma)$ is not relatively dense.

The question for which pairs $(\gamma,m)$ the set $X^m(\gamma)$ is uniformly discrete or is relatively dense is far from being solved.
We provide a necessary condition for relative denseness and we show that in certain cases, it is sufficient as well:

\begin{theorem}\label{thm:m2}
Let $\gamma\in\C$ be a non-real number $>1$ in modulus.
\begin{enumerate}\romanenumi

\item
 If $m<\abs\gamma^2-1$, then $X^m(\gamma)$ is not relatively dense.

\item
 \cite{zaimi_2004} If $m>\abs\gamma^2-1$ and $\gamma$ is not an algebraic number,
 then $X^m(\gamma)$ is not uniformly discrete.

\end{enumerate}
\end{theorem}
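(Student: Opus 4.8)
The key identity is $X^m(\gamma)=\bigl\{\sum_j a_j\gamma^j:a_j\in\{0,\dots,m\}\bigr\}$, so every element of $X^m(\gamma)$ lies in $\Z[\gamma]$. For part (i) the plan is to exhibit points of $\C$ lying at arbitrarily large distance from $X^m(\gamma)$. The natural approach is to control the growth of a point $\sum_{j=0}^{n}a_j\gamma^j$ by the growth of its absolute value, or better, by bounding $X^m(\gamma)$ from the "inside": if the powers $\gamma^j$ do not spread out densely, then $X^m(\gamma)$ has large gaps. The mechanism to make this precise is to look at the two-dimensional lattice generated by $1,\gamma$ (viewing $\C\cong\R^2$) — but this is not a lattice in general, so instead I would argue via a volume/counting estimate.

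Here is the concrete scheme. Fix a large radius $R$ and count how many elements of $X^m(\gamma)$ can lie in the ball $B(0,R)$. An element $\sum_{j=0}^{n}a_j\gamma^j$ with all $a_j\le m$ has modulus at most $m\sum_{j=0}^{n}|\gamma|^j\le m|\gamma|^{n}/(|\gamma|-1)\cdot|\gamma|$, so to reach modulus $\le R$ we may as well restrict $n$ to $n\le C\log R$ for a suitable constant; more carefully, the number of strings $(a_0,\dots,a_n)\in\{0,\dots,m\}^{n+1}$ with $\bigl|\sum a_j\gamma^j\bigr|\le R$ is at most $(m+1)^{n+1}$ for the minimal such $n$, and one checks $(m+1)^{n+1}=O(R^{\,2\log(m+1)/\log|\gamma|^2})$. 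Since $m+1<|\gamma|^2$ by hypothesis, the exponent $2\log(m+1)/\log|\gamma|^2$ is $<2$, so the number of points of $X^m(\gamma)$ in $B(0,R)$ grows like $o(R^2)$. A relatively dense set, on the other hand, must have at least $cR^2$ points in $B(0,R)$ by a packing argument (cover $B(0,R)$ by $\gtrsim R^2/D^2$ balls of radius $D/2$, each of which must contain a point). Comparing the two bounds for large $R$ yields a contradiction, proving $X^m(\gamma)$ is not relatively dense.

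The main obstacle is making the counting estimate $(m+1)^{n+1}=O(R^{<2})$ fully rigorous: distinct digit strings can give the same point, and the relation between $n$ and $R$ must be pinned down so that \emph{all} points of $X^m(\gamma)\cap B(0,R)$ are captured by strings of length $\le n(R)$ with $n(R)\sim 2\log R/\log|\gamma|^2$. One must also handle the translation-invariance issue — relative denseness is about balls $B(x,D/2)$ for \emph{all} $x$, not just $x=0$ — but this is harmless since $X^m(\gamma)\cap B(x,R)$ still has at most $|X^m(\gamma)\cap B(0,R+|x|)|$ points up to nothing, or more cleanly, one fixes $x=0$ and only uses that a relatively dense set meets $B(0,R)$ in $\Omega(R^2)$ points, which follows directly from the definition. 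For part (ii), Za\"imi's argument is cited, so nothing needs to be supplied; I would simply note that the transcendence of $\gamma$ is used to find, for every $d>0$, two distinct digit strings whose associated points in $\Z[\gamma]$ are within $d$, exploiting that $\{1,\gamma,\gamma^2,\dots\}$ has no nontrivial $\Z$-linear relation and hence $\sum a_j\gamma^j$ can be made to cluster.
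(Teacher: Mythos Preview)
Your overall strategy matches the paper's exactly: both compare a lower bound $\#\bigl(\Sigma\cap B(0,R)\bigr)\ge cR^2$ (required for relative denseness, your packing argument is the same as the paper's Lemma~\ref{lem:f}) against an upper bound $\#\bigl(X^m(\gamma)\cap B(0,R)\bigr)=o(R^2)$ coming from $m+1<|\gamma|^2$.

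The gap is in how you obtain the upper bound. You write that since $\bigl|\sum_{j=0}^n a_j\gamma^j\bigr|\le m|\gamma|^{n+1}/(|\gamma|-1)$, ``to reach modulus $\le R$ we may as well restrict $n$ to $n\le C\log R$''. This inference is backwards: the inequality is an \emph{upper} bound on the modulus, so it tells you nothing about how large $n$ might be for a point of small modulus. Because $\gamma$ is non-real, the terms $a_j\gamma^j$ can cancel, and a point in $B(0,R)$ may well have only long representations. You flag this as ``the main obstacle'' but offer no mechanism to overcome it.

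The paper's remedy is a one-line recursive inequality that sidesteps the representation-length issue entirely. If $x=\sum_{j=0}^k a_j\gamma^j$ lies in $B(0,|\gamma|r-m)$, then $y:=(x-a_0)/\gamma=\sum_{j=1}^k a_j\gamma^{j-1}$ is again in $X^m(\gamma)$ and satisfies $|y|\le(|x|+m)/|\gamma|\le r$; hence
\[
\#\bigl(X^m(\gamma)\cap B(0,|\gamma|r-m)\bigr)\;\le\;(m+1)\,\#\bigl(X^m(\gamma)\cap B(0,r)\bigr).
\]
Iterating along $r_{k+1}=|\gamma|r_k-m$ (so $r_k\sim|\gamma|^k$) gives $\#\bigl(X^m(\gamma)\cap B(0,r_k)\bigr)/r_k^2\to 0$ since $(m+1)/|\gamma|^2<1$. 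Unwinding, this is exactly the bound $N(R)=O\bigl(R^{\log(m+1)/\log|\gamma|}\bigr)$ with exponent $<2$ that you were aiming for, obtained without ever controlling~$n$.
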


The aim of this article is to study the sets $X^m(\gamma)$ simultaneously for all $m\in\N$,
 for a certain class of cubic complex Pisot units with a positive conjugate $\gamma'$.
For such $\gamma$ the R\'enyi expansions in base $\beta\eqdef1/\gamma'$ have nice properties,
 which will be crucial in the proofs.
When this base satisfies a certain finiteness property, called Property~(F) in~\cite{akiyama_2000}, we show that
 for all sufficiently large $m$ the set $X^m(\gamma)\subseteq\C$ is a cut-and-project set;
 roughly speaking, $X^m(\gamma)$ is formed by projections of points from the lattice $\Z^3$
 which lie in a sector bounded by two parallel planes in $\R^3$; see Theorem~\ref{thm:CnP}.
From that, the asymptotic behaviour of $\ell_m(\gamma)$ and $L_m(\gamma)$ follows easily, namely:
\begin{equation}\label{eq:sqrt-m}
	\ell_m(\gamma)=\Theta(1/\sqrt m)
\quad\text{and}\quad
	L_m(\gamma)=\Theta(1/\sqrt m)
,\end{equation}
where $f(m)=\Theta(1/\sqrt m)$ means that $K_1/\sqrt m\leq f(m)\leq K_2/\sqrt m$ for some positive constants $K_1$, $K_2$.

The method of inspection of Voronoi cells for a specific cut-and-project set,
 as established by Mas\'akov\'a, Patera and Zich \cite{masakova_patera_zich_2003_i,masakova_patera_zich_2003_ii,masakova_patera_zich_2005},
 enables us to give a general formula
 for both $\ell_m(\gamma)$ and $L_m(\gamma)$.
In the case where $\gamma=\gamma_T\approx-0.771+1.115\ii$ is the complex Tribonacci constant, i.e.,
 the complex root of $Y^3+Y^2+Y-1$ with a positive imaginary part, we get the following result:

\begin{theorem}\label{thm:tribo}
Let $\gamma$ be a complex root of the polynomial $Y^3+Y^2+Y-1$,
 $m\in\N$, and $k\in\Z$ be the greatest integer such that $m\geq(1-\gamma')\bigl(\frac{1}{\gamma'}\bigr)^k$,
 where $\gamma'$ is the real Galois conjugate of $\gamma$.
Then we have
\begin{equation}\label{eq:tribo}
	\ell_m(\gamma)=\abs{\gamma}^{-k}
\quad\text{and}\quad
	L_m(\gamma)=2\sqrt{\frac{1-\smash{(\gamma')^2}}{3-\smash{(\gamma')^2}}}\abs{\gamma}^{3-k}
.\end{equation}
\end{theorem}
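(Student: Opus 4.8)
The plan is to instantiate the general cut-and-project machinery developed for cubic complex Pisot units at the specific base $\gamma=\gamma_T$, the complex root of $Y^3+Y^2+Y-1$ with positive imaginary part, whose real conjugate is $\gamma'=1/\beta_T$ where $\beta_T\approx1.839$ is the real Tribonacci constant (root of $x^3-x^2-x-1$). First I would verify that $\beta=1/\gamma'$ does satisfy Property~(F): this is the classical fact (Akiyama) that the Tribonacci number has finite $\beta$-expansions for all elements of $\Z[\beta]\cap[0,\infty)$, and in particular its $d_\beta^*(1)$ expansion is $(110)^\omega$-related, so the admissibility condition is the one forbidding the factor $111$. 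Hence Theorem~\ref{thm:CnP} applies and $X^m(\gamma)$ is, for all $m>|\gamma|^2-1$, a cut-and-project set: the projection to $\C$ of the lattice points of $\Z^3$ whose projection to the one-dimensional internal space $\R$ lands in a half-open interval (the \emph{acceptance window}) $\Omega_m$ whose length is governed by $m$. The first real step is to pin down $\Omega_m$ precisely: the window is the image under the Galois map of the set of $\beta$-integers with digits in $\{0,\dots,m\}$, which for a base satisfying Property~(F) is an explicit interval of the form $[0,\ell_m)$ or a translate thereof, with $\ell_m$ of order $m$; the integer $k$ in the statement is exactly the scale for which $(1/\gamma')^k$ is the largest power still ``fitting'' inside the window, i.e.\ $\Omega_m$ is, up to the scaling by $|\gamma|^{-k}$ built into the self-similarity $\gamma\Z[\gamma]=\Z[\gamma]$, a fixed window independent of $m$.

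Next I would exploit the renormalization: because $\gamma$ is a unit, multiplication by $\gamma$ is a bijection of $\Z[\gamma]$ that acts as multiplication by $\gamma$ (a similarity of ratio $|\gamma|$ and some rotation) on the physical space $\C$ and as multiplication by $\gamma'\in(0,1)$ (a contraction) on the internal space $\R$. Therefore $X^m(\gamma)$ with acceptance window $\Omega_m$ of length $\asymp(1/\gamma')^k$ is similar, with ratio $|\gamma|^k$ in $\C$ and $(\gamma')^k$ internally, to the \emph{normalized} cut-and-project set $\Lambda$ whose window is a fixed reference interval $\Omega$ of length $\Theta(1)$ — concretely the window one gets for the ``unit'' digit set, which for Tribonacci is the interval $\bigl[0,\tfrac{1}{1-\gamma'}\bigr)$ or, after the standard symmetrization, $\bigl[-c,c\bigr)$ with $c$ depending only on $\gamma'$. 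Under this normalization $\ell_m(\gamma)=|\gamma|^{-k}\cdot\ell(\Lambda)$ and $L_m(\gamma)=|\gamma|^{-k}\cdot L(\Lambda)$ up to the additional $|\gamma|^3$ that appears in the $L$ formula and which I expect to come from the precise choice of reference window (a factor $\gamma^{-3}$ discrepancy between the ``natural'' window of length $(1/\gamma')^k$ and the one indexed by $k$ in the theorem statement, coming from the three-term recurrence having memory length~3). So the whole problem reduces to computing the minimal interpoint distance and the covering radius of a \emph{single, explicit} planar cut-and-project set $\Lambda$.

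For that single set I would apply the Masáková–Patera–Zich method of classifying Voronoi cells of one-dimensional-window cut-and-project sets: as the internal coordinate $x'$ of a point sweeps across the reference window $\Omega$, the local configuration of $\Lambda$ around the corresponding physical point changes only finitely many times, at breakpoints determined by where the internal images of the nearest ``star'' vectors enter or leave $\Omega$; between consecutive breakpoints the Voronoi cell is combinatorially constant. Thus I would (i) list the finitely many shortest difference vectors of $\Lambda$ — these are the $\C$-images of $\gamma^j$ for small $j$ together with a few $\Z$-combinations — together with their internal images, (ii) tabulate the resulting partition of $\Omega$ into subintervals and the associated Voronoi cell on each piece (this is where the paper's figures \Ti--\Txiv and \includeV/\includeW windows are used), (iii) read off $\ell(\Lambda)$ as the minimum over all pieces of the shortest vector actually realized, and $L(\Lambda)=2\rho(\Lambda)$ as the maximum over all pieces of the circumradius of the Voronoi cell. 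Carrying this out for $\gamma_T$ should yield $\ell(\Lambda)=1$ (realized by the vector $\pm1$, since the contracted digit $1$ always lies in $\Omega$) and a maximal Voronoi circumradius equal to $\sqrt{\frac{1-(\gamma')^2}{3-(\gamma')^2}}\,|\gamma|^{3}$ attained on the ``worst'' subinterval — the isolated geometric factor being the distance from the circumcenter of a particular triangle of $\Lambda$-points to its vertices, computed by elementary coordinate geometry in $\C\cong\R^2$ using the explicit algebraic values $\gamma',\,|\gamma|^2=1/\gamma'$. Finally, undoing the renormalization by the factor $|\gamma|^{-k}$ gives \eqref{eq:tribo}.

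The main obstacle I anticipate is neither the Property~(F) check nor the algebra but the combinatorial bookkeeping in step (ii): correctly enumerating \emph{all} the Voronoi-cell types across the window, making sure no subinterval is overlooked where a shorter distance or a larger circumradius occurs, and correctly matching the half-open nature of the window (which point gets the boundary) so that the $\liminf/\limsup$ — i.e.\ the extremal values \emph{attained infinitely often} rather than merely approached — come out to the stated closed forms. In particular one must check that the extremal configurations for $\ell$ and for $L$ correspond to internal coordinates that are actually hit infinitely often by the orbit (which holds by density of the internal projection, guaranteed by $\gamma$ being a Pisot \emph{unit} with an irrational internal expansion), and that the exceptional factor $|\gamma|^{3-k}$ versus $|\gamma|^{-k}$ is consistent with the normalization of the window implicit in the definition of $k$ via $m\ge(1-\gamma')(1/\gamma')^k$; I would double-check this constant by testing the formula against a small explicit $m$ where $X^m(\gamma)$ can be computed by hand.
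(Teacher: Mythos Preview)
Your overall architecture matches the paper's: verify Property~(F), invoke Theorem~\ref{thm:CnP} to realize $X^m(\gamma)$ as $\Sigma\bigl([0,m/(1-\gamma'))\bigr)$, use the self-similarity of Proposition~\ref{prop:self-sim} to renormalize, and then analyze Voronoi cells à la Mas\'akov\'a--Patera--Zich. However, there is a genuine gap in the renormalization step. You assert that after scaling by $\gamma^{k}$ the window becomes ``a fixed reference interval $\Omega$ of length $\Theta(1)$\,\dots\ independent of $m$'', and then propose to analyze a \emph{single} cut-and-project set $\Lambda$. This is not correct: the window $[0,m/(1-\gamma'))$ rescaled by the appropriate power of $\gamma'$ lands at $[0,c)$ with $c$ varying over a whole fundamental domain (the paper uses $c\in(\beta^2,\beta^3)$), and different values of $m$ give genuinely different~$c$. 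The set $\{m(\gamma')^{k}/(1-\gamma'):m\ge1\}$ is in fact dense in that interval. So the Voronoi-cell inventory you produce in step~(ii) for one fixed $\Omega$ (this is the paper's Algorithm~\ref{algor:Xi}) does \emph{not} suffice; you need the palette for every $c$ in the interval, and must show that $\min_\T\delta(\T)$ and $\max_\T\Delta(\T)$ are \emph{constant} as $c$ ranges over $(\beta^2,\beta^3)$. This is precisely the content of Proposition~\ref{prop:dD-beta2}, established via the second-level Algorithm~\ref{algor:Theta} (Corollary~\ref{corol:Theta}): one first cuts $(\beta^2,\beta^3)$ into finitely many subintervals on which the palette is constant, then checks all of them.

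Two smaller points follow from this. Your guess $\ell(\Lambda)=1$ ``realized by $\pm1$'' is off: for the window $[0,\beta^2)$ one gets $\min\delta=1/\sqrt\beta$, while for $c\in(\beta^2,\beta^3)$ one gets $\min\delta=1/\beta$ (Table~\ref{tab:pal-tribo}); the answer is window-dependent, which is exactly why the family must be analyzed. And the factor $|\gamma|^{3}$ in $L_m(\gamma)$ does not come from ``memory length~3'' of the recurrence but simply from the bookkeeping: normalizing $c$ into $(\beta^2,\beta^3)$ rather than $(1,\beta)$ shifts the exponent by~$2$, and $\max\Delta$ on that interval carries a further $\sqrt\beta=|\gamma|$. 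Finally, the paper also checks that the endpoint case $c=\beta^{j}$ never occurs for $c=m/(1-\gamma')$, since at that endpoint the palette (and hence $\min\delta$) differs; you would need this too.
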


\medskip

The article is organized as follows.
In Section~\ref{sect:beta}, we recall certain notions from the theory of $\beta$-expansions.
Section~\ref{sect:pf:m2} provides the proof of the 1st part of Theorem~\ref{thm:m2}.
In Section~\ref{sect:CnP} we prove that $X^m(\gamma)$ is a cut-and-project set in certain cases.
Section~\ref{sect:voronoi} describes the algorithms for computing $\ell_m(\gamma)$ and $L_m(\gamma)$.
These algorithms are applied to the complex Tribonacci number in Section~\ref{sect:tribo},
 providing the proof of Theorem~\ref{thm:tribo}.
In Section~\ref{sect:delone} we compute another characteristic of $X^m(\gamma)$ that is based on Delone tessellations.
Comments and open problems are in Section~\ref{sect:concl}.

All computations were carried out in Sage~\cite{sage}.
The pictures were drawn using Ti\textit kZ~\cite{tikz}.

\section{Preliminaries}\label{sect:beta}

\subsection{\texorpdfstring{$\beta$}{beta}-numeration}

Let us recall some facts concerning $\beta$-expansions.
For a real base $\beta>1$, and for a number $x\geq0$, there exist a unique $N\in\Z$
 and unique integer coefficients $a_{N},a_{N-1},a_{N-2},\dotsc$ such that $a_N\neq0$ and 
\[
	0\leq x-\sum_{j=n}^N a_j\beta^j <\beta^n
	\quad\text{for all $n\leq N$}
.\]
The string $a_N a_{N-1} \dotsm a_1 a_0.a_{-1}a_{-2}\dotsm$ is then called the \defined{R\'enyi expansion} of $x$
 in base~$\beta$~\cite{renyi_1957}.
We immediately see that $a_j\in\set{k\in\Z}{0\leq k<\beta}$.
For $\beta\notin\Z$, it means that $a_j\in\{0,\dots,\qfl\beta\}$,
 where $\qfl\beta$ denotes the greatest integer $\leq\beta$.
If only finitely many $a_j$\!'s are non-zero, we speak about the \defined{finite R\'enyi expansion} of~$x$.
The set of numbers $x\in\R$ such that $\abs x$ has a finite R\'enyi expansion is denoted $\Fin(\beta)$.
We say that $\beta>1$ satisfies \defined{Property~(F)} if $\Fin(\beta)$ is a ring,
 i.e., $\Fin(\beta)=\Z[1/\beta]$, where $\Z[y]$ denotes as usual the integer combinations of powers of~$y$.

\subsection{Complex Pisot numbers}

We widely use the algebraic properties of a cubic complex Pisot number $\gamma$.
Such a number has two other Galois conjugates.
One of them is the complex conjugate $\conj\gamma$.
The second one is real and $<1$ in modulus; we denote it $\gamma'$;
 we have either $-1<\gamma'<0$ or $0<\gamma'<1$.
In general, for $z\in\Q(\gamma)$ we denote by $z'\in\Q(\gamma')\subset\R$ its image under
 the Galois isomorphism that maps $\gamma\mapsto\gamma'$.
When $\gamma$ is a unit (i.e., the constant term of its minimal polynomial is $\pm1$),
 we know that $\Z[1/\gamma]=\Z[\gamma]=\gamma\Z[\gamma]$.

The method we present here can be applied only in the case when:
\begin{equation}\label{eq:cmplx-gamma}
	\text{\begin{tabular}[t]{c}
		$\gamma$ is a cubic complex Pisot unit, its real Galois conjugate $\gamma'$ is positive,
	\\
		and $\beta\eqdef 1/\gamma'$ has Property (F).
	\end{tabular}}
\end{equation}
It implies that the minimal polynomial of $\gamma$ is of the form $Y^3+bY^2+aY-1$ with $a,b\in\Z$.
Such a polynomial has a complex root if and only if its discriminant is negative, i.e.,
\[
	-18ab-4a^3+a^2b^2+4b^3-27<0
.\]

The number $\beta=1/\gamma'$ is a root of $Y^3-aY^2-bY-1$.
Akiyama \cite{akiyama_2000} showed that $\beta$ has Property~(F) if and only if
\[
	\abs{b-1}\leq a
\quad\text{and}\quad
	b\geq-1
.\]
Therefore we are interested in cases where both conditions are satisfied.

In particular, the \defined{complex Tribonacci constant} $\gamma_T\approx -0.771+1.115\ii$ (the root of $Y^3+Y^2+Y-1$
 with a positive imaginary part)
 satisfies \eqref{eq:cmplx-gamma}, as well as the complex roots of polynomials $Y^3+bY^2+aY-1$ for $b=0,\pm1$ and $a\geq1$, with the exception $(a,b)=(1,-1)$.

\section{Proof of Theorem~\ref{thm:m2}}\label{sect:pf:m2}

We prove the first part of Theorem~\ref{thm:m2}.
We cannot easily follow the lines of the proof of the result for the real case (i.e., that $m<\beta-1$ implies $L^m(\beta)=+\infty$),
 because it relies on the natural ordering of $\R$.
In the proof of the theorem, the following `folklore' lemma about the asymptotic density of relatively dense sets is used:

\begin{lemma}\label{lem:f}
Let $\Sigma\subset\C$ be a relatively dense set.
Then
\begin{equation}\label{eq:fr}
	\liminf_{r\to\infty} \frac{\#\bigl(\Sigma\cap B(0,r)\bigr)}{r^2} >0
,\end{equation}
where $\#A$ is the number of elements of the set $A$.
\end{lemma}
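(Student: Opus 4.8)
The plan is to exploit the definition of relative denseness directly: if $\Sigma$ is relatively dense with constant $D$, then every point of $\C$ is within distance $D/2$ of $\Sigma$, so in particular the closed disk $B(0,r)$ is covered by the union $\bigcup_{s\in\Sigma}B(s,D/2)$. Restricting attention to those $s\in\Sigma$ that can contribute, namely $s\in\Sigma\cap B(0,r+D/2)$, we get a covering of $B(0,r)$ by at most $\#\bigl(\Sigma\cap B(0,r+D/2)\bigr)$ disks of radius $D/2$. Comparing areas yields
\[
	\pi r^2 \leq \#\bigl(\Sigma\cap B(0,r+D/2)\bigr)\cdot \pi (D/2)^2
,\]
hence $\#\bigl(\Sigma\cap B(0,r+D/2)\bigr)\geq (2r/D)^2$.

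The remaining step is cosmetic: I would substitute $r'=r+D/2$, so that the inequality reads $\#\bigl(\Sigma\cap B(0,r')\bigr)\geq \bigl(2(r'-D/2)/D\bigr)^2$ for all $r'>D/2$. Dividing by $(r')^2$ and letting $r'\to\infty$, the right-hand side tends to $4/D^2>0$, which gives
\[
	\liminf_{r\to\infty}\frac{\#\bigl(\Sigma\cap B(0,r)\bigr)}{r^2}\geq \frac{4}{D^2}>0
,\]
exactly \eqref{eq:fr}. (One may equally avoid the area comparison and instead tile $\C$ by a square grid of mesh $D/\sqrt2$, noting each closed cell has diameter $D/\sqrt2 < D$, wait---diameter $=D$, so it lies in some ball $B(x,D/2)$ is not automatic; the disk-covering argument is cleaner, so I would use that.)

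I do not anticipate any genuine obstacle here; the only care needed is the bookkeeping of the additive constant $D/2$ in the radius (the disks centred at points of $\Sigma$ reaching into $B(0,r)$ may have centres slightly outside it), which is why I pass to $r+D/2$ before comparing areas, and this constant washes out in the $\liminf$. A secondary point is that $B(0,r)$ being covered by finitely many disks of positive radius already forces $\#\bigl(\Sigma\cap B(0,r+D/2)\bigr)$ to be finite and nonzero for $r$ large, so the quotient in \eqref{eq:fr} is well-defined; this is immediate once the covering inequality is in place.
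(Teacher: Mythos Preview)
Your argument is correct. The covering-by-disks idea works exactly as you describe: each $x\in B(0,r)$ lies in some $B(s,D/2)$ with $s\in\Sigma$, and then $\abs{s}\leq\abs{x}+D/2\leq r+D/2$, so the union over $s\in\Sigma\cap B(0,r+D/2)$ of the disks $B(s,D/2)$ covers $B(0,r)$; subadditivity of area gives the inequality, and the shift $r\mapsto r+D/2$ disappears in the limit.

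The paper takes a different route: it tiles $\C$ by the square lattice $\lambda\Z[\ii]$ with $\lambda$ chosen so that every $\lambda\times\lambda$ square contains a point of $\Sigma$ (one may take $\lambda=D$, since such a square contains a ball of radius $D/2$), then counts the roughly $(r\sqrt2/\lambda)^2$ lattice cells entirely inside $B(0,r)$, each contributing a point. This yields $\liminf\geq 2/\lambda^2$. Your area argument is arguably cleaner---it avoids the mild bookkeeping of which squares fit inside the disk and the (harmless) issue of points lying on shared edges---and it even produces the sharper constant $4/D^2$ versus the paper's $2/D^2$. The square-grid method, on the other hand, is the one you sketched and then set aside; as you noticed, it requires cells of side at least $D$ (not $D/\sqrt2$) so that the inscribed ball has radius $D/2$.
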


\begin{proof}
Since $\Sigma$ is relatively dense, there exists $\lambda>0$ such that every square in $\C$ with side $\lambda$
 contains a point of $\Sigma$.
Therefore every cell of the lattice $\lambda\Z[\ii]=\set{\lambda a+\ii\lambda b}{a,b\in\Z}$ contains a point of $\Sigma$.
Since $B(0,r)$ contains at least $n^2$ cells, where $n=\qfl[\big]{r\sqrt2/\lambda}$, we get
\[
	\liminf_{r\to\infty} \frac{\#\bigl(\Sigma\cap B(0,r)\bigr)}{r^2}
	\geq \liminf_{r\to\infty} \frac{\qfl[\big]{r\sqrt2/\lambda}^2}{r^2} = \frac{2}{\lambda^2}
	>0
.\qedhere\]
\end{proof}

\begin{proof}[Proof of Theorem~\ref{thm:m2}, 1st statement]
For simplicity, we denote $\Sigma\eqdef X^m(\gamma)$.

First, we show that for any $r\geq m$ we have
\[
	\Sigma\cap B\bigl(0,\abs\gamma r-m\bigr)\subseteq \gamma\bigl(\Sigma\cap B(0,r)\bigr)+\{0,\dots,m\}
\]
 and therefore
\begin{equation}\label{eq:frL}
	\# \bigl(\Sigma\cap B(0,\abs\gamma r-m)\bigr)\leq (m+1)\#\bigl(\Sigma\cap B(0,r)\bigr)
.\end{equation}
To prove this, consider $x=\sum_{j=0}^k a_j\gamma^j$ with $a_j\in\{0,\dots,m\}$
 and such that $\abs x\leq\abs\gamma r-m$.
Then $y\eqdef (x-a_0)/\gamma=\sum_{j=1}^k a_j\gamma^{j-1}\in \Sigma$ and
 $\abs y\leq (\abs x+a_0)/\abs\gamma\leq (\abs\gamma r-m+m)/\abs\gamma=r$.
Since $x=\gamma y+a_0$, the inclusion is valid.

Our aim is to prove that under the assumption $m<\abs\gamma^2-1$, the set $\Sigma$ is not relatively dense.
According to Lemma~\ref{lem:f}, it is enough to construct a sequence $(r_k)$ such that $r_k\to\infty$ and
\[
	\lim_{k\to\infty} n_k=0,
\quad\text{where}\quad
	n_k\eqdef\frac{\#\bigl(\Sigma\cap B(0,r_k)\bigr)}{r_k^2}
.\]
Since $\Sigma=X^m(\gamma)$ always contains $0$, the set $\Sigma\cap B(0,r_k)$ is non-empty
 and we have that $n_k>0$.

Consider a sequence given by the recurrence relation $r_{k+1}= \abs\gamma r_k-m$
 and $r_0\eqdef \abs\gamma^2+\frac{m}{\abs\gamma-1}>m$.
The choice of $r_0$ guarantees that $r_k=\abs\gamma^{k+2}+\frac{m}{\abs\gamma-1}$,
 therefore $r_k\to\infty$ and $r_{k+1}/r_k\to\abs\gamma$.
Then \eqref{eq:frL} gives 
\(
\# \bigl(\Sigma\cap B(0,r_{k+1})\bigr)\leq (m+1)\#\bigl(\Sigma\cap B(0,r_k)\bigr)
\),
which yields
\[
	\frac{n_{k+1}}{n_k}
	\leq \frac{(m+1)r_k^2}{r_{k+1}^2}
	\xrightarrow{k\to\infty}
	\frac{m+1}{\abs\gamma^2}<1
,\]
 therefore $n_k\to0$ as desired.
\end{proof}

\section{Cut-and-project sets versus \texorpdfstring{$X^m(\gamma)$}{Xm(gamma)}}\label{sect:CnP}

A cut-and-project scheme in dimension $d+e$ consists of two linear maps
$\Psi:\R^{d+e}\to\R^d$ and $\Phi:\R^{d+e}\to\R^e$ satisfying:
\begin{enumerate}
\item $\Psi(\R^{d+e}) = \R^d$ and the restriction of $\Psi$ to the lattice $\Z^{d+e}$ is injective;
\item the set $\Phi(\Z^{d+e})$ is dense in $\R^e$.
\end{enumerate}

Let $\Omega\subset\R^e$ be a nonempty bounded set such that its closure equals the closure of its interior,
 i.e., $\overline\Omega=\overline{\Omega^\circ}$.
Then the set
\[
	\Sigma(\Omega)\eqdef\set[\big]{\Psi(v)}{v\in\Z^{d+e}, \Phi(v)\in\Omega}\subseteq\R^d
\]
 is called a \defined{cut-and-project} set with acceptance window $\Omega$.
Cut-and-project sets can be defined in a slightly more general way, cf.~\cite{moody_1997}.
The assumptions made on the acceptance window $\Omega$ ensure that every cut-and-project set is a Delone set.

We use the concept of cut-and-project sets for $d=2$ and $e=1$.
With a slight abuse of notation, we consider $\Psi:\R^3\to\C\simeq\R^2$.
Then it is straightforward that for a cubic complex Pisot number $\gamma$, the set defined by
\begin{equation}\label{eq:SO}
	\Sigma_\gamma(\Omega)=\set[\big]{z\in\Z[\gamma]}{z'\in\Omega}
,\quad\text{where $\Omega\subseteq\R$ is an interval}
,\end{equation}
 is a cut-and-project set.
Really, we have
\begin{align*}
	\Psi_\gamma(v_0,v_1,v_2)&=v_0+v_1\gamma+v_2\gamma^2
		\simeq  \binom{\Re(v_0+v_1\gamma+v_2\gamma^2)}{\Im(v_0+v_1\gamma+v_2\gamma^2)}
\\ \text{and}\quad
	\Phi_\gamma(v_0,v_1,v_2)&=v_0+v_1\gamma'+v_2(\gamma')^2
.\end{align*}
We will omit the index $\gamma$ in the sequel.
We now show how $X^m(\gamma)$ fit into the cut-and-project scheme:

\begin{theorem}\label{thm:CnP}
Let $\gamma$ be a cubic complex Pisot unit with a positive conjugate $\gamma'$, and let $m$ be an integer $m\geq\abs\gamma^2-1$.
Suppose that base $1/\gamma'$ has Property~(F).
Then $X^m(\gamma)$ is a cut-and-project set, namely
\begin{equation}\label{eq:CnP}
	X^m(\gamma)=\Sigma(\Omega)=\set[\big]{z\in\Z[\gamma]}{z'\in\Omega}
\quad\text{with }
	\Omega=\bigl[0,m/(1-\gamma')\bigr)
.\end{equation}
\end{theorem}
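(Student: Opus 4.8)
The plan is to prove the two inclusions $X^m(\gamma)\subseteq\Sigma(\Omega)$ and $\Sigma(\Omega)\subseteq X^m(\gamma)$ separately, where $\Omega=[0,m/(1-\gamma'))$.

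For the inclusion $X^m(\gamma)\subseteq\Sigma(\Omega)$, take $x=\sum_{j=0}^n a_j\gamma^j$ with $a_j\in\{0,\dots,m\}$. Since $\gamma$ is an algebraic integer, $x\in\Z[\gamma]$, so it remains to check $x'\in\Omega$. Applying the Galois isomorphism $\gamma\mapsto\gamma'$ gives $x'=\sum_{j=0}^n a_j(\gamma')^j$, and since $0<\gamma'<1$ and each $a_j\geq0$ we get $x'\geq0$; moreover $x'\leq m\sum_{j\geq0}(\gamma')^j=m/(1-\gamma')$. The only subtle point is strictness of the upper bound: equality would require $a_j=m$ for all $j\geq0$, which is impossible for a finite sum, so in fact $x'<m/(1-\gamma')$. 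Hence $x\in\Sigma(\Omega)$.

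For the reverse inclusion $\Sigma(\Omega)\subseteq X^m(\gamma)$, take $z\in\Z[\gamma]$ with $z'\in[0,m/(1-\gamma'))$. Because $\gamma$ is a unit we have $\Z[1/\gamma]=\Z[\gamma]$, so $z\in\Z[1/\beta]$ where $\beta=1/\gamma'$; then $z'\in\Z[1/\beta]$ as well, and Property~(F) for $\beta$ gives that $z'$ (being nonnegative) has a \emph{finite} R\'enyi expansion in base $\beta$. The idea is to convert this finite $\beta$-expansion of $z'$ into a representation of $z$ over the alphabet $\{0,\dots,m\}$ in base $\gamma$. Concretely, if $z'=\sum_{j=-L}^{N} c_j\beta^j$ with digits $c_j\in\{0,\dots,\lfloor\beta\rfloor\}$, then — since the map $\gamma\mapsto\gamma'$, equivalently $1/\gamma\mapsto\beta$, is a field isomorphism — the element $\sum_{j=-L}^{N} c_j\gamma^{-j}\in\Z[\gamma]$ has the same image $z'$ under the Galois map, hence equals $z$ (the Galois map is injective on $\Q(\gamma)$). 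The resulting expression $z=\sum_{j} c_j\gamma^{-j}$ has possibly negative powers of $\gamma$ and possibly digits larger than $m$, so two things must be fixed: (a) shift to nonnegative powers, and (b) reduce digits into $\{0,\dots,m\}$.

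The main obstacle is step (b): turning the $\beta$-expansion digits into digits $\leq m$. This is exactly where the hypothesis $m\geq|\gamma|^2-1$ must enter. I expect the argument to proceed by a carry/normalization procedure using the algebraic relation $\gamma^3+b\gamma^2+a\gamma-1=0$, equivalently $1=a\gamma+b\gamma^2+\gamma^3$, to rewrite a digit-$1$ in a low position as a combination of digits in higher positions, repeatedly, so as to express $z$ as $\sum_{j\geq0}a_j\gamma^j$ with all $a_j\in\{0,\dots,m\}$; the bound on $m$ guarantees the carries can always be absorbed without exceeding $m$. One should track the partial sums and show the process terminates (using that the $\beta$-expansion of $z'$ was finite and that the ``tail'' in the $\gamma'$-image shrinks geometrically, so only finitely many positions are ever touched). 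The real-part/absolute-value estimate $m\geq|\gamma|^2-1$ is presumably what controls the worst-case carry; verifying this inequality suffices at each carry step is the crux, and the rest is bookkeeping. Once both inclusions are established, \eqref{eq:CnP} follows, and since $\Omega$ is a bounded interval (whose closure equals the closure of its interior) the general theory quoted after \eqref{eq:SO} shows $\Sigma(\Omega)$ is a genuine cut-and-project set, completing the proof.
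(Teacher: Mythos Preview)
Your inclusion $X^m(\gamma)\subseteq\Sigma(\Omega)$ is fine and matches the paper. The reverse inclusion, however, has a genuine gap, and the diagnosis of the difficulty is inverted.

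First, step~(b) is \emph{not} an obstacle at all, and this is precisely where the hypothesis $m\geq\abs\gamma^2-1$ enters. Since $\beta=1/\gamma'=\gamma\conj\gamma=\abs\gamma^2$ is irrational and $m$ is an integer with $m\geq\beta-1$, we get $m\geq\qfl\beta$ immediately; the R\'enyi digits $c_j\in\{0,\dots,\qfl\beta\}$ therefore already lie in $\{0,\dots,m\}$. No carry is needed for the digit size.

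The real obstacle is step~(a). If $z'\geq\beta$, the R\'enyi expansion of $z'$ has a nontrivial integer part, and under $\beta\mapsto 1/\gamma$ this produces genuinely negative powers of $\gamma$ in your expression for $z$. Your proposed remedy, rewriting via $1=a\gamma+b\gamma^2+\gamma^3$, is not a proof as stated: the coefficient $b$ may be negative (Property~(F) allows $b=-1$), so a single carry can create negative digits; and even when $a,b\geq0$, iterating the substitution can push digits above $m$ at nearby positions, forcing further carries whose termination with all digits in $\{0,\dots,m\}$ you have not established. The phrase ``the bound on $m$ guarantees the carries can always be absorbed'' is exactly the assertion that needs proof, and it is not obvious.

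The paper avoids this entirely by a different decomposition. It treats $z'\in[0,1)$ directly: there the R\'enyi expansion is $0.a_1a_2\dotsm a_n$, which under the isomorphism gives $z=\sum_{j=1}^n a_j\gamma^j\in X^{\qfl\beta}(\gamma)\subseteq X^m(\gamma)$ with only positive powers of $\gamma$. For $z'\in[1,\tfrac{m}{1-\gamma'})$ it does \emph{not} take the R\'enyi expansion of $z'$; instead it greedily subtracts the digit $m$ itself: choose the minimal $k$ with $z'-\sum_{j=0}^k m\beta^{-j}<0$, then pick $b\in\{0,\dots,m\}$ so that $u'\eqdef\beta^k\bigl(z'-\sum_{j=0}^{k-1}m\beta^{-j}-b\beta^{-k}\bigr)\in[0,1)$, and apply the first case to $u'$. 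This produces
\[
z'=\sum_{j=0}^{k-1} m(\gamma')^j + b(\gamma')^k + \sum_{j>k} a_{j-k}(\gamma')^j,
\]
hence $z\in X^m(\gamma)$, with no negative powers and no carry argument. The hypothesis $m\geq\abs\gamma^2-1$ is used only to ensure $\qfl\beta\leq m$ in the first case; the second case uses the specific shape of $\Omega=[0,\tfrac{m}{1-\gamma'})$ to guarantee the greedy step terminates.
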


\begin{proof}
Inclusion $\subseteq$:
Let $z\in X^m(\gamma)$. Then $z=\sum_{j=0}^n a_j\gamma^j$ with $a_j\in\{0,\dots,m\}$ and clearly $z\in\Z[\gamma]$.
Moreover,
\[
	0\leq z'=\sum_{j=0}^n a_j (\gamma')^j\leq \sum_{j=0}^n m(\gamma')^j<\frac{m}{1-\gamma'}
.\]

Inclusion $\supseteq$:
Let us take $z\in\Z[\gamma]$ with $z'\in\Omega$.
Denote $\beta=1/\gamma'=\gamma\conj\gamma=\abs\gamma^2$.
We discuss the following two cases:

\begin{enumerate}

\item
 Suppose $0\leq z'<1$.
 The real base $\beta$ has Property~(F) by the hypothesis.
 Therefore every number from $\Z[1/\beta]\cap[0,1)$
  has a finite expansion $0.a_1a_2a_3\dots a_n$ over the alphabet $\{0,\dots,m_0\}$, where $m_0\eqdef \qfl{\beta}$
  (the expansion certainly starts after the fractional point since $z<1$).
 This means that $z'=\sum_{j=1}^n a_j \beta^{-j}$ and therefore $z=\sum_{j=1}^n a_j \gamma^j\in X^{m_0}(\gamma)$.
 Since $X^{m_0}(\gamma)\subseteq X^m(\gamma)$, we get $z\in X^m(\gamma)$.

\item
 Suppose $1\leq z'<m/(1-\gamma')$.
 Since $z'<\sum_{j=0}^\infty m\beta^{-j}$, there exists a~minimal $k\geq0$ such that $z'-\sum_{j=0}^k m\beta^{-j}<0$.
 Let $b\in\{0,\dots,m\}$ be such that
 \[
	0\leq z'-\sum_{j=0}^{k-1}m\beta^{-j}-b\beta^{-k}<\beta^{-k}
 ,\]
  where $\sum_{j=0}^{-1}m\beta^{-j}\eqdef0$.
 Then
 \[
	u'\eqdef \beta^k\biggl(z'-\sum_{j=0}^{k-1} m\beta^{-j}-b\beta^{-k}\biggr)
 \]
  satisfies $0\leq u'<1$, and by the previous case there exist $a_1,\dots,a_n\in\{0,\dots,m_0\}$
  such that $u'=\sum_{j=1}^n a_j\beta^{-j}$.
 Altogether,
 \[
	z'=\sum_{j=0}^{k-1} m(\gamma')^j + b(\gamma')^k + \sum_{j=k+1}^{k+n} a_{j-k}(\gamma')^j
 \]
 and $z\in X^m(\gamma)$.\qedhere

\end{enumerate}
\end{proof}

The property of cut-and-project sets which allows us to determine the values of $\ell_m(\gamma)$ and $L_m(\gamma)$
 is the self-similarity.
We say that a Delone set $\Sigma\subseteq\C$ is \defined{self-similar} with a factor $\kappa\in\C$, $\abs\kappa>1$,
 if $\kappa\Sigma\subseteq\Sigma$.
In general, cut-and-project sets are not self-similar.
In our special case \eqref{eq:SO}, not only the sets are self-similar, but we can prove even a stronger
 property that will be useful later:

\begin{proposition}\label{prop:self-sim}
Let $\gamma$ be a cubic complex Pisot unit.
Then
\[
	\Sigma\bigl((\gamma')^k\Omega\bigr)=\gamma^k\Sigma(\Omega)
\quad\text{for any interval $\Omega$ and any $k\in\Z$}
.\]
In particular, if $\Omega=[0,c)$ and $\gamma'$ is positive, then $\gamma'\Omega\subseteq\Omega$ and $\gamma\Sigma\subseteq\Sigma$.
\end{proposition}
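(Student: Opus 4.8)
The plan is to unwind the definitions and let the Galois isomorphism do the work. Fix a cubic complex Pisot unit $\gamma$, an interval $\Omega$, and $k\in\Z$. Since $\gamma$ is a unit we have $\Z[\gamma]=\gamma\Z[\gamma]=\gamma^k\Z[\gamma]$, so multiplication by $\gamma^k$ is a bijection of $\Z[\gamma]$ onto itself; likewise multiplication by $(\gamma')^k$ is a bijection of $\Z[\gamma']$ onto itself. The key compatibility is that the Galois isomorphism $z\mapsto z'$ from $\Q(\gamma)$ to $\Q(\gamma')$ is a ring homomorphism sending $\gamma\mapsto\gamma'$, hence $(\gamma^k z)'=(\gamma')^k z'$ for all $z\in\Q(\gamma)$ and all $k\in\Z$ (for negative $k$ one uses that $\gamma$, $\gamma'$ are nonzero).

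With this in hand I would argue by a direct chain of equivalences. A point $w$ lies in $\gamma^k\Sigma(\Omega)$ iff $w=\gamma^k z$ for some $z\in\Z[\gamma]$ with $z'\in\Omega$. Writing $z=\gamma^{-k}w$ (which lies in $\Z[\gamma]$ exactly when $w$ does, by the unit property), this happens iff $w\in\Z[\gamma]$ and $(\gamma^{-k}w)'=(\gamma')^{-k}w'\in\Omega$, i.e.\ iff $w\in\Z[\gamma]$ and $w'\in(\gamma')^k\Omega$. That last condition is precisely $w\in\Sigma\bigl((\gamma')^k\Omega\bigr)$, using the description \eqref{eq:SO} of $\Sigma(\cdot)$ as $\set{z\in\Z[\gamma]}{z'\in\text{(the interval)}}$. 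Since $(\gamma')^k\Omega$ is again an interval, both sides are legitimate cut-and-project sets and the identity $\Sigma\bigl((\gamma')^k\Omega\bigr)=\gamma^k\Sigma(\Omega)$ follows.

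For the final assertion, suppose $\Omega=[0,c)$ and $\gamma'>0$. Then $(\gamma')^1[0,c)=[0,\gamma'c)$, and since $0<\gamma'<1$ we have $\gamma'c<c$, so $\gamma'\Omega=[0,\gamma'c)\subseteq[0,c)=\Omega$. Monotonicity of $\Omega\mapsto\Sigma(\Omega)$ with respect to inclusion (immediate from the defining condition $z'\in\Omega$) gives $\Sigma(\gamma'\Omega)\subseteq\Sigma(\Omega)$, and combining with the case $k=1$ of the identity just proved yields $\gamma\Sigma(\Omega)=\Sigma(\gamma'\Omega)\subseteq\Sigma(\Omega)$, i.e.\ $\gamma\Sigma\subseteq\Sigma$.

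I do not expect a genuine obstacle here; the proof is essentially bookkeeping. The only point that needs a word of care is the passage to negative exponents $k$, which relies on $\gamma$ being a \emph{unit} so that $\gamma^{-1}\in\Z[\gamma]$ and the Galois map still intertwines the two inverses; this is exactly where the hypothesis that $\gamma$ is a cubic complex Pisot \emph{unit} (rather than merely a complex Pisot number) is used. One should also note in passing that $(\gamma')^k\Omega$ is understood as the set $\set{(\gamma')^k t}{t\in\Omega}$, which is an interval of the same orientation when $k$ is even or $\gamma'>0$, and a reflected interval when $\gamma'<0$ and $k$ is odd — in either case still an interval, so $\Sigma$ of it is defined.
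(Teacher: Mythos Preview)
Your proof is correct and follows essentially the same route as the paper: both use that $\gamma$ is a unit (so $\Z[\gamma]=\gamma\Z[\gamma]$) together with the multiplicativity of the Galois isomorphism $z\mapsto z'$ to rewrite the defining condition. The only cosmetic difference is that the paper first treats $k=\pm1$ and then invokes induction, whereas you handle all $k\in\Z$ in a single chain of equivalences; your version also spells out the ``in particular'' clause, which the paper leaves implicit.
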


\begin{proof}
We prove the claim for $k=\pm1$, the general case follows by induction.
Because $\Z[\gamma]=\gamma\Z[\gamma]$, we have that
\begin{multline}\label{eq:proof-self-sim}
	\Sigma(\gamma'\Omega)
	=\set[\big]{x\in\gamma\Z[\gamma]}{x'\in\gamma'\Omega}
	=\set[\big]{x\in\gamma\Z[\gamma]}{\tfrac{1}{\gamma'}x'\in\Omega}
\\
	=\gamma\set[\big]{y\in\Z[\gamma]}{y'\in\Omega}
	=\gamma\Sigma(\Omega)
,\end{multline}
 which implies the validity of the statement for $k=+1$.
If we apply \eqref{eq:proof-self-sim} to the window $\tilde\Omega=\gamma'\Omega$, we get
 $\Sigma(\tilde\Omega)=\gamma\Sigma(\frac{1}{\gamma'}\tilde\Omega)$, i.e., 
 $\frac{1}{\gamma}\Sigma(\tilde\Omega)=\Sigma(\frac{1}{\gamma'}\tilde\Omega)$,
 which implies the validity of the statement for $k=-1$.
\end{proof}

\begin{remark}
Theorem~\ref{thm:CnP} and Proposition~\ref{prop:self-sim} imply the asymptotic behaviour of $\ell_m(\gamma)$ and $L_m(\gamma)$
 as described in \eqref{eq:sqrt-m}, because $\abs{\gamma'}=1/\sqrt{\abs\gamma}$.
\end{remark}

\section{Voronoi tessellations}\label{sect:voronoi}

For a Delone set $\Sigma$, the \defined{Voronoi cell} of a point $x\in\Sigma$ is the set of points
 which are closer to $x$ than to any other point in $\Sigma$.
Formally
\begin{equation}\label{eq:def-T}
	\T(x)\eqdef\set[\big]{z\in\C}{\abs{z-x}\leq\abs{z-y}\text{ for all }y\in\Sigma}
.\end{equation}
The cell is a convex polygon having $x$ as an interior point.
Clearly $\bigcup_{x\in\Sigma} \T(x)=\C$ and the interiors of two cells do not intersect.
Such a collection of cells $\set{\T(x)}{x\in\Sigma}$ is called a \defined{tessellation} of the complex plane.
For every cell $\T(x)$ we define two characteristics:
\begin{itemize}

\item $\delta(\T(x))$ is the maximal diameter $d>0$ such that $B(x,d/2)\subseteq\T(x)$;

\item $\Delta(\T(x))$ is the minimal diameter $D>0$ such that $\T(x)\subseteq B(x,D/2)$.

\end{itemize}
These $\delta$ and $\Delta$ allow us to compute the values of $\ell_m(\gamma)$ and $L_m(\gamma)$, namely
\[
	\ell_m(\gamma)=\inf_x \delta\bigl(\T(x)\bigr)
\quad\text{and}\quad
	L_m(\gamma)=\sup_x \Delta\bigl(\T(x)\bigr)
,\]
where $x$ runs the whole set $\Sigma=X^m(\gamma)$.

A \defined{protocell} of a point $x$ is the set $\T(x)-x$.
We can define $\delta,\Delta$ analogously for the protocells.
The set of all protocells of the tessellation of $\Sigma$ is called the \defined{palette}
 of $\Sigma$.
We therefore obtain that
\begin{equation}\label{eq:dl}
	\ell_m(\gamma)=\inf_{\T} \delta(\T)
\quad\text{and}\quad
	L_m(\gamma)=\sup_{\T} \Delta(\T)
,\end{equation}
 where $\T$ runs the whole palette of $\Sigma$.

For computing $\delta(\T)$ and $\Delta(\T)$, we modify the approach of~\cite{masakova_patera_zich_2003_i},
 where \mbox{$2$-dimensional} cut-and-project sets based on quadratic irrationalities are concerned.
To find the Voronoi cell of a point $x\in\Sigma(\Omega)$ one does not need to consider all points $y\in\Sigma(\Omega)$.
It is easy to see that only points $y$ closer to $x$ than $\Delta(\T(x))$ influence the shape of the tile $\T(x)$, i.e.,
\begin{equation}\label{eq:XY}
	\T(x)=\set[\big]{z\in\C}{\abs{z-x}\leq\abs{z-y}\text{ for $y\in\Sigma(\Omega)$, $\abs{y-x}\leq\Delta(\T(x))$}}
.\end{equation}

But before the shape of $\T(x)$ is known, we do not know the value of $\Delta(\T(x))$.
So we need to find some positive constant $L$ such that
\begin{equation}\label{eq:good-L}
	\Delta\bigl(\T(y)\bigr)\leq L
\quad\text{for all}\quad
	y\in\Sigma(\Omega)
.\end{equation}

In the rest of this section, we consider cut-and-project sets $\Sigma(\Omega)$ as given by \eqref{eq:SO},
 where $\gamma$ satisfies \eqref{eq:cmplx-gamma},
 i.e., $1/\gamma'$ has Property~(F),
 and where $\Omega=[0,c)$ with $c>0$ (however, not necessarily of the form $c=\frac{m}{1-\gamma'}$).
We denote by $\Re z=\frac{z+\conj z}{2}$ and $\Im z=\frac{z-\conj z}{2\ii}$ respectively
 the real and the imaginary part of $z\in\C$.

\begin{lemma}\label{lem:L}
Let $\Omega=[0,c)$ be an interval.
Let $p$ be the first positive integer such that $\Im(\gamma^p)$ and $\Im\gamma$ have the opposite signs
 and let $k$ be the smallest integer satisfying $(\gamma')^k< c/2$.
Then
\begin{equation}\label{eq:L}
	L\eqdef\abs\gamma^k \! \max_{\substack{i,j\in\{0,p-1,p\}\\i<j}}
	\abs[\Big]{\frac{
		\gamma^{ i + j } (\gamma^ i -\gamma^ j )
	}{
		\Im(\gamma^ i \conj\gamma^ j )
	}}
\end{equation}
 satisfies $\Delta(\T(y))\leq L$ for all $y\in\Sigma(\Omega)$.
\end{lemma}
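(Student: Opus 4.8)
The plan is to exhibit, for every point $y\in\Sigma(\Omega)$, a small explicit collection of points of $\Sigma(\Omega)$ surrounding $y$, whose perpendicular bisectors with $y$ already enclose $y$ in a bounded polygon of diameter at most $L$; then \eqref{eq:XY} forces $\T(y)\subseteq$ that polygon, hence $\Delta(\T(y))\le L$. By the self-similarity of Proposition~\ref{prop:self-sim} it suffices to treat a single scale: since $k$ is the smallest integer with $(\gamma')^k<c/2$, the window $(\gamma')^{-k}\Omega=[0,(\gamma')^{-k}c)$ contains $[0,2)$, and $\gamma^{-k}\T(y)$ is a protocell of $\Sigma\bigl((\gamma')^{-k}\Omega\bigr)$; thus it is enough to bound $\Delta$ for protocells of a cut-and-project set whose window $[0,c')$ has $c'\ge 2$, and then multiply the resulting bound by $\abs\gamma^k$. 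This explains the prefactor $\abs\gamma^k$ in \eqref{eq:L}.

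First I would reduce, by translation, to $y=0$ (note $0\in\Sigma$ always, and translating $y$ by an element of $\Z[\gamma]$ only shifts the window $[0,c')$ to an interval of the same length, which still contains an interval of length $2$ after a further translation if needed). The key observation is then that the three exponents $0$, $p-1$, $p$ — where $p$ is the first positive integer with $\Im(\gamma^p)$ opposite in sign to $\Im\gamma$ — furnish lattice vectors whose star-projections $1,\gamma',(\gamma')^{p-1},(\gamma')^p$ all lie in a short interval, so that $\gamma^i-\gamma^j$ for $i<j$ in $\{0,p-1,p\}$ are points of $\Sigma$ (after the rescaling, since $(\gamma^i-\gamma^j)'=(\gamma')^i-(\gamma')^j$ has absolute value $<2\le c'$, placing it in the window up to translation), and their directions in $\C$ together span all directions: the sign condition on $\Im(\gamma^p)$ guarantees that the three directions $\gamma^0$, $\gamma^{p-1}$, $\gamma^p$, read cyclically, turn by more than a half-turn in total, so the corresponding perpendicular bisectors (lines through the midpoint $\tfrac12(\gamma^i-\gamma^j)$ perpendicular to $\gamma^i-\gamma^j$) bound a finite triangle containing $0$. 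The circumscribing radius of that triangle is computed from the three bisector lines; writing out the intersection of the bisectors of the pair $(i,j)$ and $(i,j')$ and simplifying gives exactly the quantity $\dfrac{\gamma^{i+j}(\gamma^i-\gamma^j)}{\Im(\gamma^i\conj\gamma^j)}$ as (twice) the distance from $0$ to a vertex, whence the maximum over the three pairs, multiplied by $\abs\gamma^k$, dominates $\Delta(\T(0))$.

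The two points needing care are: (a) verifying that each difference $\gamma^i-\gamma^j$ (with $i<j$ from $\{0,p-1,p\}$), after the rescaling by $(\gamma')^{-k}$ and a suitable integer translation, genuinely lies in $\Sigma$ with star-image inside the enlarged window — this is where the choice of $k$ (so that the window has length $\ge 2$) and the positivity of $\gamma'$ (so $(\gamma')^i-(\gamma')^j\in(-1,1)$) are used; and (b) showing that the three perpendicular bisectors actually cut out a \emph{bounded} region around $0$, i.e. that the normals $\gamma^0$, $\gamma^{p-1}$, $\gamma^p$ are not all contained in an open half-plane through the origin. This second point is the crux, and it is precisely what the definition of $p$ secures: $\gamma^{p-1}$ still has $\Im$ of the same sign as $\gamma$ (by minimality of $p$), while $\Im(\gamma^p)$ has the opposite sign, so the three vectors straddle the real axis in a way that leaves no open half-plane containing all of them; hence some convex combination of the three inequalities $\langle z,\gamma^i-\gamma^j\rangle\le\tfrac12\abs{\gamma^i-\gamma^j}^2$ bounds $\abs z$, giving finiteness, and the explicit vertex computation then gives the stated numerical bound. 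The algebraic simplification in (b) — turning the pairwise bisector intersections into the closed-form expression with $\Im(\gamma^i\conj\gamma^j)$ in the denominator — is routine planar geometry and I would not carry it out here beyond noting that $\Im(\gamma^i\conj\gamma^j)\ne0$ for the relevant pairs exactly because the corresponding directions are not parallel, which again follows from the sign pattern enforced by $p$.
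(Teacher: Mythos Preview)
Your plan has a genuine gap: you take the \emph{differences} $\gamma^i-\gamma^j$ (for $i<j$ in $\{0,p-1,p\}$) as the three neighbours of $0$, but these three points do not surround $0$. For the Tribonacci base ($p=2$) the points $1-\gamma\approx1.77-1.12\ii$, $1-\gamma^2\approx1.65+1.72\ii$, $\gamma-\gamma^2\approx-0.12+2.84\ii$ form a triangle whose intersection with the real axis lies near $[1.24,1.72]$, so $0$ is outside it and the three half-planes $\{z:\abs z\le\abs{z-(\gamma^i-\gamma^j)}\}$ have \emph{unbounded} intersection. Moreover the closed-form $\dfrac{\gamma^{i+j}(\gamma^i-\gamma^j)}{\Im(\gamma^i\conj\gamma^j)}$ that you claim arises from your bisector intersections is precisely the circumdiameter of the triangle $0,\gamma^i,\gamma^j$ (cf.\ \eqref{eq:vij-xij}), not of $0,\gamma^i-\gamma^j,\gamma^{i'}-\gamma^{j'}$; so the formula in \eqref{eq:L} is telling you that the intended neighbours are the \emph{powers} $\gamma^{k},\gamma^{k+p-1},\gamma^{k+p}$ themselves.

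The paper's argument is simpler than your self-similarity detour: take $x_1=\gamma^k$, $x_2=\gamma^{k+p-1}$, $x_3=\gamma^{k+p}$ directly. The choice of $k$ gives $x_j'=(\gamma')^{k+\cdot}\in(0,c/2)$, so all three lie in $\Sigma(\Omega)$; the choice of $p$ puts $x_2$ and $x_3$ on opposite sides of the real line with $x_1$ on the positive real axis, so $0$ is interior to the triangle $x_1x_2x_3$ and the region $V=\{z:\abs z\le\abs{z-x_j},\,j=1,2,3\}\supseteq\T(0)$ is the bounded triangle with vertices the circumcentres $v_{ij}$ of $0,x_i,x_j$, yielding \eqref{eq:L}. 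Your reduction to $y=0$ by ``translation of the window'' is also not right: the paper instead observes that since $x_j'\in(0,c/2)$, for any $y\in\Sigma(\Omega)$ either $y+x_1,y+x_2,y+x_3$ (when $y'\in[0,c/2)$) or $y-x_1,y-x_2,y-x_3$ (when $y'\in[c/2,c)$) lie in $\Sigma(\Omega)$, so the same triangle (up to sign) works around every $y$.
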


\begin{figure}[t]
\includegraphics{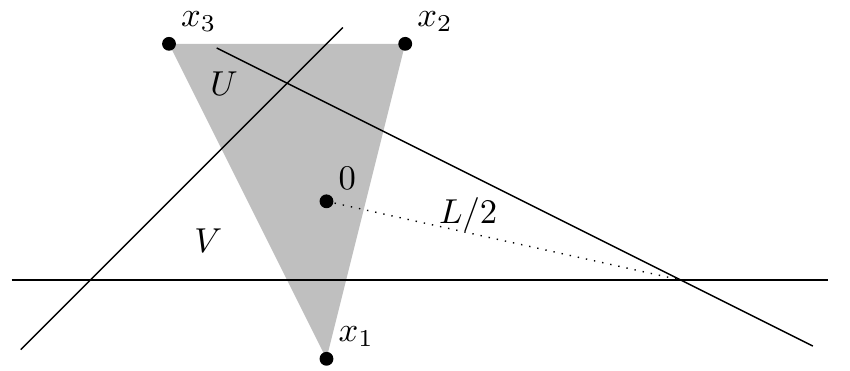}
\caption{To the proof of Lemma~\ref{lem:L}.}
\label{fig:triangle}
\end{figure}

\begin{proof}
We first prove the statement for $y=0$.
The choice of $k$ guarantees that $x_1\eqdef\gamma^k$, $x_2\eqdef\gamma^{k+p-1}$ and $x_3\eqdef\gamma^{k+p}$
 satisfy $x_1,x_2,x_3\in\Sigma(\Omega)$,
 whereas the choice of $p$ guarantees that $0$ is an inner point of the triangle $U$ with
 vertices $x_1$, $x_2$, $x_3$ (see Figure~\ref{fig:triangle}).
According to \eqref{eq:def-T} we have
\[
	V\eqdef\set[\big]{z\in\C}{\abs{z-0}\leq\abs{z-x_j}\text{ for $j=1,2,3$}} \supseteq \T(0)
.\]
Let $\rho$ be the radius of the smallest ball centered at $0$ and containing the whole triangle $V$.
From the definition of $\T(x)$ and $\Delta(\T(x))$ we see that $\Delta(\T(0))\leq 2\rho$.

The vertices of $V$ are the points $v_{12}, v_{23}, v_{31}$ such that
\begin{equation}\label{eq:vij-cond}
	\abs{x_i-v_{ij}}=\abs{x_j-v_{ij}}=\abs{0-v_{ij}}
.\end{equation}
These equations have a unique solution
\begin{equation}\label{eq:vij-xij}
	v_{ij} = \ii\frac{x_ix_j(\conj{x_i}-\conj{x_j})}{2\Im(x_i\conj{x_j})},
\quad\text{whence}\quad
	 \abs{v_{ij}} = \frac{1}{2} \abs[\Big]{\frac{x_ix_j(x_i-x_j)}{\Im(x_i\conj{x_j})}}
.\end{equation}
Then $\rho=\max\,\abs{v_{ij}}$, thus the estimate \eqref{eq:L} is valid for $y=0$
 and it remains to show that it is valid for all $y\in\Sigma(\Omega)$.
If $y'\in[0,c/2)$ then the three points $y+x_j$ for $j=1,2,3$ are in $\Sigma(\Omega)$.
If $y'\in[c/2,c)$ then the three points $y-x_j$ for $j=1,2,3$ are in $\Sigma(\Omega)$.
Both of these cases follow from the fact that $x_1',x_2',x_3'\in(0,c/2)$.
Therefore either $x_1,x_2,x_3$ or $-x_1,-x_2,-x_3$ are elements of $\Sigma(\Omega)-y$,
 which means that the same estimate \eqref{eq:L} can be used.
\end{proof}

To describe the palette of $\Sigma(\Omega)$, we find all possible $L$-patches, i.e., the local configurations around the points of $\Sigma(\Omega)$ up to a distance $L$.
More precisely, the \defined{$L$-patch} of $x\in\Sigma(\Omega)$ is the set
\begin{equation}\label{eq:AB}
	\calP_L(x)\eqdef \bigl(\Sigma(\Omega)\cap B(x,L)\bigr) - x
.\end{equation}
Since we consider the window $\Omega=[0,c)$, the $L$-patch equals
\begin{equation}\label{eq:LC}
	\calP_L(x)=\set{z\in\Z[\gamma]}{x'+z'\in[0,c)\text{ and }\abs z\leq L}
.\end{equation}

\begin{lemma}\label{lem:L-eq}
Let $x,y\in\Sigma(\Omega)$ with $\Omega=[0,c)$ and $L$ satisfying \eqref{eq:good-L}.
Then the equality of two $L$-patches $\calP_L(x)=\calP_L(y)$
  implies the equality of the protocells, i.e.,  $\T(x)-x=\T(y)-y$.
\end{lemma}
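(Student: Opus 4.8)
The plan is to show that the protocell $\T(x)-x$ is a function of the finite set $\calP_L(x)$ alone; once that is established, $\calP_L(x)=\calP_L(y)$ immediately forces $\T(x)-x=\T(y)-y$. The one place where the hypothesis \eqref{eq:good-L} does real work is in replacing the \emph{a priori unknown} cut-off radius $\Delta(\T(x))$ appearing in \eqref{eq:XY} by the fixed global constant $L$.

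For a set $S\subseteq\Sigma(\Omega)$ put $\T_S(x)\eqdef\set[\big]{z\in\C}{\abs{z-x}\leq\abs{z-y}\text{ for all }y\in S}$, so that $\T(x)=\T_{\Sigma(\Omega)}(x)$ and $S\subseteq S'$ implies $\T_{S'}(x)\subseteq\T_S(x)$ (more points give more half-plane constraints). Writing $S\eqdef\set{y\in\Sigma(\Omega)}{\abs{y-x}\leq\Delta(\T(x))}$ and $S'\eqdef\set{y\in\Sigma(\Omega)}{\abs{y-x}\leq L}$, the bound $\Delta(\T(x))\leq L$ from \eqref{eq:good-L} gives $S\subseteq S'\subseteq\Sigma(\Omega)$, hence
\[
	\T(x)=\T_{\Sigma(\Omega)}(x)\subseteq\T_{S'}(x)\subseteq\T_S(x)=\T(x)
,\]
where the last equality is exactly \eqref{eq:XY}. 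Therefore $\T(x)=\T_{S'}(x)$, i.e., in the Voronoi cell of $x$ only the points of $\Sigma(\Omega)\cap B(x,L)$ matter. Translating by $-x$ and substituting $w=z-x$, $u=y-x$ (note $0\in\calP_L(x)$ since $x\in\Sigma(\Omega)$, its constraint being vacuous), I obtain
\[
	\T(x)-x=\set[\big]{w\in\C}{\abs{w}\leq\abs{w-u}\text{ for all }u\in\calP_L(x)}
,\]
whose right-hand side depends on $x$ only through $\calP_L(x)$. The same computation applied to $y$ yields $\T(y)-y=\set{w\in\C}{\abs{w}\leq\abs{w-u}\text{ for all }u\in\calP_L(y)}$, so $\calP_L(x)=\calP_L(y)$ gives $\T(x)-x=\T(y)-y$, as claimed.

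The only step that needs care is the first one — legitimately enlarging the cut-off from the cell-dependent $\Delta(\T(x))$ to the uniform $L$ — and this is precisely what \eqref{eq:good-L} is there for; the monotonicity of $S\mapsto\T_S(x)$ together with \eqref{eq:XY} then closes the loop. Everything after that is a routine change of variables.
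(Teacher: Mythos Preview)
Your proof is correct and follows essentially the same approach as the paper's: both show that $\T(x)-x=\set{w\in\C}{\abs{w}\leq\abs{w-u}\text{ for all }u\in\calP_L(x)}$ depends only on $\calP_L(x)$. The paper simply asserts (citing \eqref{eq:XY}) that one may replace the cut-off $\Delta(\T(x))$ by $L$, whereas you spell out the monotonicity argument $\T(x)\subseteq\T_{S'}(x)\subseteq\T_S(x)=\T(x)$ that justifies this replacement; your version is more explicit but not otherwise different.
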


\begin{proof}
 Using \eqref{eq:XY} we can write
\[
	\T(x) = \set[\big]{z\in\C}{\abs{z-x}\leq\abs{z-v}\text{ for all $v\in\Sigma(\Omega)\cap B(x,L)$}}
\]
 and thus
\[
	\T(x)-x = \set[\big]{s\in\C}{\abs{s}\leq\abs{s-w}\text{ for all $w\in\calP_L(x)$}}
,\]
 which depends only on $\calP_L(x)$ and not on $x$ itself.
\end{proof}

\begin{lemma}\label{lem:Lc}
Let $x,y\in\Sigma(\Omega)$ with $\Omega=[0,c)$ and $L>0$.
If $\calP_L(x)\neq \calP_L(y)$ then there exists $\xi$ from the following finite subset of $[0,c]$:
\begin{align}\label{eq:Xi}
	\Xi\eqdef
	\set[\big]{z'}{z\in\calP_L(0)}
	\cup
	\set[\big]{c-z'}{z\in\calP_L(0)}
,\end{align}
 such that $\xi$ lies between $x'$ and $y'$, more precisely, $\min\{x',y'\}<\xi\leq\max\{x',y'\}$.
\end{lemma}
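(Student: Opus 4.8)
The plan is to prove the contrapositive in disguise: assuming that no element of $\Xi$ lies in the half-open interval $\bigl(\min\{x',y'\},\max\{x',y'\}\bigr]$, I will show that $\calP_L(x)=\calP_L(y)$. Without loss of generality suppose $x'\leq y'$, and set $t\eqdef y'-x'\geq0$; the goal becomes showing that translation by $t$ in the internal space induces a bijection between the two $L$-patches. Recall from \eqref{eq:LC} that
\[
	\calP_L(x)=\set[\big]{z\in\Z[\gamma]}{x'+z'\in[0,c)\text{ and }\abs z\leq L}
,\qquad
	\calP_L(y)=\set[\big]{z\in\Z[\gamma]}{y'+z'\in[0,c)\text{ and }\abs z\leq L}
.\]
Since $x,y\in\Sigma(\Omega)$ both patches are nonempty (they contain $0$), and the map $z\mapsto z$ is the obvious candidate identification: a point $z$ with $\abs z\leq L$ lies in $\calP_L(x)$ iff $z'\in[-x',c-x')$ and in $\calP_L(y)$ iff $z'\in[-y',c-y')$. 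So the two patches differ precisely at those $z\in\Z[\gamma]$ with $\abs z\leq L$ whose internal image $z'$ falls in the symmetric difference of the intervals $[-x',c-x')$ and $[-y',c-y')$.

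The key step is to analyze that symmetric difference. Since $[-x',c-x')$ and $[-y',c-y')$ are two half-open intervals of the same length $c$, obtained from one another by the shift of $t=y'-x'\geq0$ to the left, their symmetric difference is contained in $[-y',-x')\cup[c-y',c-x')$. If $z\in\calP_L(x)\setminus\calP_L(y)$, then $z'\in[-y',-x')$, which rearranges to $y'+z'\in[0,t)\subseteq[0,c)$ but also forces $x'+z'<0$: wait — more carefully, $z\in\calP_L(x)$ gives $x'+z'\in[0,c)$ while $z\notin\calP_L(y)$ with $\abs z\leq L$ gives $y'+z'\notin[0,c)$, and since $y'+z'=(x'+z')+t\geq 0$ this means $y'+z'\geq c$, i.e. $x'+z'\in[c-t,c)$. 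Then $-z'\in\calP_L(0)$ is the relevant witness (because $-z$ has $\abs{-z}=\abs z\leq L$ and $0+(-z)'=-z'$; I must check $-z'\in[0,c)$, which follows from $x'+z'\in[0,c)$ and $0\le x' <c$ — this is where I will need $x,y\in\Sigma(\Omega)$ so that $x',y'\in\Omega=[0,c)$). Setting $\xi\eqdef c-(-z')=c+z'$ I get an element of $\Xi$ (the second set in \eqref{eq:Xi}, applied to $-z\in\calP_L(0)$) that satisfies $x'<\xi\leq y'$, namely $x'=c-t+{}$(something)$<\xi\le y'$ since $x'+z'\in[c-t,c)$ unwinds to $\xi=c+z'\in[x'+ \ldots]$. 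Symmetrically, if $z\in\calP_L(y)\setminus\calP_L(x)$, then $z'\in[c-y',c-x')$ — no, rather in the left piece $[-y',-x')$: $z\in\calP_L(y)$ gives $y'+z'\in[0,c)$ and $z\notin\calP_L(x)$ gives $x'+z'<0$, so $z'\in[-y',-x')$ and thus $z\in\calP_L(0)$ (check $z'\in[0,c)$: from $y'+z'\ge0$ and $y'<c$ we get $z'>-c$, and $z'<-x'\le0$, hmm so $z'$ could be negative — here instead I take $z$ itself as the witness in $\calP_L(0)$ only if $z'\in[0,c)$; when $z'<0$ I should use the first set $\set{z'}{z\in\calP_L(0)}$ after re-examining which patch the point belongs to — the bookkeeping of which of $x',y'$ is the endpoint being crossed determines which half of $\Xi$ supplies $\xi$). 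In either case I produce $\xi\in\Xi$ with $x'<\xi\le y'$, contradicting the hypothesis. Hence the symmetric difference of the patches is empty and $\calP_L(x)=\calP_L(y)$.

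The main obstacle is purely the casework and the direction of the half-open endpoints: I must be scrupulous about whether the crossed boundary is $0$ or $c$, hence whether $\xi$ comes from $\{z'\}$ or $\{c-z'\}$, and about the strict-versus-nonstrict inequalities (the statement asks for $\min\{x',y'\}<\xi\le\max\{x',y'\}$, which mirrors the half-open convention $[0,c)$). There is no deep idea here beyond \eqref{eq:LC} and the fact that $x',y'\in[0,c)$; the whole content is that two windows of equal length differing by an internal shift have their symmetric difference supported near the two endpoints, and crossing an endpoint is exactly detected by an element of the finite set $\Xi$. I will organize the write-up by first reducing to $x'\le y'$, then treating the two inclusions of the symmetric difference in parallel, at each stage exhibiting the explicit $\xi$ and verifying $\xi\in\Xi$ and the betweenness inequality.
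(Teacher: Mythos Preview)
Your approach is the same as the paper's: take a $z$ in the symmetric difference of the two $L$-patches and read off from \eqref{eq:LC} which endpoint of $[0,c)$ is crossed, producing the separating $\xi$. The paper does this directly rather than as a contrapositive, but the content is identical.

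Your casework, however, has a sign slip that you should fix before writing it up. With $x'\le y'$ and $z\in\calP_L(x)\setminus\calP_L(y)$, you correctly reach $z'\in[c-y',c-x')\subseteq(0,c)$; this means $z$ itself (not $-z$) lies in $\calP_L(0)$, and the witness is $\xi\eqdef c-z'$ from the second half of $\Xi$, which indeed satisfies $x'<c-z'\le y'$. Your choice $\xi=c+z'$ exceeds $c$ and is not in $\Xi$. Symmetrically, for $z\in\calP_L(y)\setminus\calP_L(x)$ you get $z'\in[-y',-x')$, so $-z'\in(x',y']\subseteq(0,c)$, whence $-z\in\calP_L(0)$ and $\xi\eqdef(-z)'=-z'$ comes from the first half of $\Xi$ with $x'<\xi\le y'$. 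Once you assign $\pm z$ to the correct half of $\Xi$ in each case, the argument is complete and coincides with the paper's proof.
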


\begin{proof}
 Without loss of generality, suppose that there exists $z$ such that $z\in\calP_L(x)$ and $z\notin\calP_L(y)$.
 According to \eqref{eq:LC} we have $\abs z\leq L$, $x'+z'\in[0,c)$, and $y'+z'\notin[0,c)$.

If $x'<y'$ then $x'+z'<c\leq y'+z'$, therefore $0\leq x'<c-z'\leq y'<c$ and thus $x'$ and $y'$ are separated by $\xi\eqdef c-z'$.
 We have that $c-z'\in(0,c)$, or equivalently $z'\in(0,c)$.
 As $\abs z\leq L$, we conclude that $z\in\calP_L(0)$.

If $x'>y'$ then $y'+z'<0\leq x'+z'$, therefore $0\leq y'<-z'\leq x'<c$ and thus $x'$ and $y'$ are separated by $\xi\eqdef-z'$.
We have that $-z'\in(0,c)$.
As $\abs{-z}=\abs{z}\leq L$, we conclude that $-z\in\calP_L(0)$.
\end{proof}

The two lemmas enable us to partition the interval $\Omega$ into sub-intervals such that 
 the points of $\Sigma(\Omega)$ whose Galois conjugates lie in the same sub-interval have the same protocell,
 formally:

\begin{corollary}\label{corol:Xi}
Let $\Omega=[0,c)$ be an interval.
Then there exists a finite set $\Xi=\{\xi_0=0<\xi_1<\dots<\xi_{N-1}<\xi_N=c\}$ such that
 the mapping
\[
	x'\mapsto \T(x)-x
\]
 is constant on $[\xi_{j-1},\xi_j)\cap\Z[\gamma']$ for each $j=1,\dots,N$.
\end{corollary}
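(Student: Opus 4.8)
The plan is to assemble the corollary directly from Lemma~\ref{lem:L-eq} and Lemma~\ref{lem:Lc}, using Lemma~\ref{lem:L} only to supply one admissible constant~$L$. First I would fix $L$ as in \eqref{eq:L}; by Lemma~\ref{lem:L} it satisfies \eqref{eq:good-L}. Since $\Sigma(\Omega)$ is a cut-and-project set, hence a Delone set, the ball $B(0,L)$ meets it in finitely many points, so $\calP_L(0)$ is a finite subset of $\Z[\gamma]$ and consequently the set $\Xi$ defined in \eqref{eq:Xi} is finite. Because $0\in\calP_L(0)$, the set $\Xi$ contains $0'=0$ and $c-0'=c$, and every element of $\Xi$ lies in $[0,c]$ since $z'\in[0,c)$ for $z\in\calP_L(0)$. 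I would then list the elements of $\Xi$ in increasing order as $\xi_0=0<\xi_1<\dots<\xi_{N-1}<\xi_N=c$ (with $N\geq1$ because $0<c$); these points partition $\Omega$ into the sub-intervals $[\xi_{j-1},\xi_j)$, $j=1,\dots,N$, since $\bigcup_{j=1}^N[\xi_{j-1},\xi_j)=[0,c)=\Omega$.

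Before proving constancy, I would note that $x'\mapsto\T(x)-x$ is genuinely a function on $\Omega\cap\Z[\gamma']$: by \eqref{eq:LC} the patch $\calP_L(x)$ depends on $x$ only through $x'$, and by Lemma~\ref{lem:L-eq} the protocell $\T(x)-x$ is determined by $\calP_L(x)$; hence any two points of $\Sigma(\Omega)$ with the same Galois image share a protocell.

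The core step is to show this function is constant on each $[\xi_{j-1},\xi_j)\cap\Z[\gamma']$, which I would argue by contradiction. Suppose $x,y\in\Sigma(\Omega)$ satisfy $x',y'\in[\xi_{j-1},\xi_j)$ but $\T(x)-x\neq\T(y)-y$. By the contrapositive of Lemma~\ref{lem:L-eq} we get $\calP_L(x)\neq\calP_L(y)$, and then Lemma~\ref{lem:Lc} produces some $\xi\in\Xi$ with $\min\{x',y'\}<\xi\leq\max\{x',y'\}$. But $x',y'\in[\xi_{j-1},\xi_j)$ forces $\xi_{j-1}\leq\min\{x',y'\}<\xi\leq\max\{x',y'\}<\xi_j$, i.e.\ $\xi$ is an element of $\Xi$ strictly between the consecutive values $\xi_{j-1}$ and $\xi_j$ --- a contradiction with the choice of the $\xi_j$'s. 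Hence the protocell is the same for all such $x$, which is the assertion.

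I do not expect a serious obstacle here; the only points meriting explicit justification are the finiteness of $\calP_L(0)$ (immediate from the Delone property of cut-and-project sets) and the well-definedness of $x'\mapsto\T(x)-x$ (immediate from \eqref{eq:LC} together with Lemma~\ref{lem:L-eq}). The remainder is bookkeeping: sorting $\Xi$ and observing that, by construction, no element of $\Xi$ can lie strictly inside one of the sub-intervals it generates.
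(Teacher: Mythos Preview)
Your proposal is correct and follows essentially the same route as the paper: fix an $L$ satisfying \eqref{eq:good-L}, take $\Xi$ from \eqref{eq:Xi}, and combine the contrapositive of Lemma~\ref{lem:Lc} with Lemma~\ref{lem:L-eq} to conclude that two points with Galois images in the same sub-interval have equal protocells. The paper's proof is a three-line version of precisely this; the additional points you spell out (finiteness of $\calP_L(0)$, that $0,c\in\Xi\subseteq[0,c]$, and the well-definedness of $x'\mapsto\T(x)-x$) are left implicit there.
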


\begin{proof}
Consider $L$ satisfying \eqref{eq:good-L} and let $\Xi$ be given by \eqref{eq:Xi}.
Suppose $x,y\in\Sigma(\Omega)$ satisfy $x',y'\in[\xi_{j-1},\xi_j)$.
According to Lemma~\ref{lem:Lc} we have $\calP_L(x)=\calP_L(y)$.
Therefore by Lemma~\ref{lem:L-eq} their protocells are equal.
\end{proof}

\begin{remark}
From the last two statements, we can conclude that $\Sigma([0,c))$ is a \defined{repetitive set with finite local complexity},
 i.e., for each $L>0$, the number of $L$-patches is finite and each of them appears for infinitely many $x\in\Sigma([0,c))$.
Finite local complexity is justified by the finiteness of set $\Xi$.
Repetitiveness is justified by the fact that the each interval $[\xi_{j-1},\xi_j)$
 contains infinitely many points of $\Z[\gamma']$.

Let us mention that while finite local complexity is a property of all cut-and-project sets,
 repetitiveness depends on the boundary of window $\Omega$.
In particular, cut-and-project sets with $\Omega$ of the form $[l,r)$ are repetitive,
 cf.~\cite{moody_1997}.
\end{remark}

The corollary is constructive and it allows us to compute all protocells of the Voronoi
 tessellation of $\Sigma(\Omega)$ for a fixed $\Omega=[0,c)$:

\begin{algor}\label{algor:Xi}\leavevmode
\begin{enumerate}
\item[\textbullet]
 Input: $\gamma$ satisfying \eqref{eq:cmplx-gamma}, $\Omega=[0,c)$, $L$ satisfying \eqref{eq:good-L},
  e.g.\@ given by \eqref{eq:L}.
\item[\textbullet]
 Output: The palette of $\Sigma(\Omega)$.
\item Compute the set $\Xi=\{\xi_0=0<\xi_1<\dots<\xi_{N-1}<\xi_N=c\}$ given by \eqref{eq:Xi}.
\item\label{enum:Xi-local} For each interval $[\xi_j,\xi_{j+1})$ compute the corresponding $L$-patch.
\item\label{enum:Xi-proto} Compute the corresponding protocells to each of these patches.
\item Remove possible duplicates in the list of protocells.
\end{enumerate}
\end{algor}

\begin{figure}
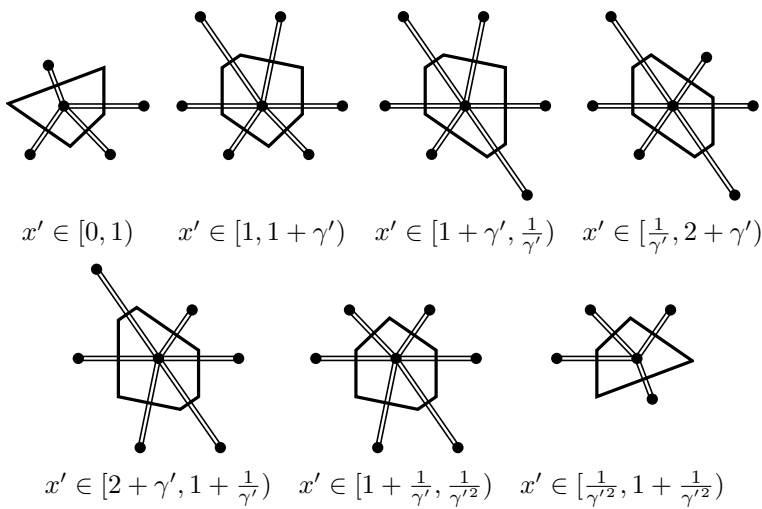

\includeV{7}{0,1}
\includeV{6}{1,1+\gamma'}
\includeV{5}{1+\gamma',\frac{1}{\gamma'}}
\includeV{4}{\frac{1}{\gamma'},2+\gamma'}
\includeV{3}{2+\gamma',1+\frac{1}{\gamma'}}
\includeV{2}{1+\frac{1}{\gamma'},\frac{1}{{\gamma'}^2}}
\includeV{1}{\frac{1}{{\gamma'}^2},1+\frac{1}{{\gamma'}^2}}
\caption{Voronoi protocells (the palette) for
 $X^2(\gamma)=\Sigma(\Omega)$, where $\Omega=[0,\frac{2}{1-\gamma'})$
 and $\gamma=\gamma_T$ is the complex Tribonacci constant.
}\label{fig:one-tribo}
\end{figure}

\begin{example}\label{ex:one-tribo}
We illustrate how the algorithm works for $\gamma=\gamma_T$ the complex Tribonacci constant and
 $c=2/(1-\gamma')=\beta^2+1$, where we denote as usual $\beta\eqdef 1/\gamma'$.
In this case, $\Sigma([0,c))=X^2(\gamma)$ by Theorem~\ref{thm:CnP}.
We have $k=-1$ in Lemma~\ref{lem:L} and since $\arg\gamma\in(\pi/2,\pi)$, we have $p=2$.
Therefore $L$ is the maximum of the values
\[
	\frac{1}{\abs\gamma}\abs[\Big]{\frac{\gamma(\gamma-1)}{\Im\gamma}}
	\approx 1.877
,\quad
	\frac{1}{\abs\gamma}\abs[\Big]{\frac{\gamma^2(\gamma^2-1)}{\Im(\gamma^2)}}
	\approx 1.877
,\quad
	\frac{1}{\abs\gamma}\abs[\Big]{\frac{\gamma^2(\gamma-1)}{\Im\gamma}}
	\approx 2.546
,\]
 i.e., $L=\abs{\gamma(\gamma-1)}/\Im\gamma$.
The set $\set{z'}{z'\in\Z[\gamma']\cap[0,c)\text{ and }\abs z\leq L}$ contains $28$ points.
The set $\Xi$, given as a union of two $28$-element sets in \eqref{eq:Xi}, has only $33$ elements
 instead of $56$ because many elements appear in both of them.
This gives $32$ cases in steps \ref{enum:Xi-local}--\ref{enum:Xi-proto} of the algorithm.
After we remove the duplicates in the list of the $32$ protocells,
 we end up with the list in Figure \ref{fig:one-tribo}.
The double lines connect the center of the protocell with the centers of the neighboring cells.
A part of the Voronoi tessellation of $\Sigma(\Omega)$ is drawn in Figure~\ref{fig:voronoi}.
Note that all computations are performed in the algebraic library of Sage~\cite{sage}.
Numbers $a+b\gamma+c\gamma^2\in\Z[\gamma]$ are stored as triples of integers $(a,b,c)$
 and thus results of all arithmetic operations are precise.

Let us determine the parameters $\ell_2(\gamma)$ and $L_2(\gamma)$, with the help of relations~\eqref{eq:dl}.
For each protocell $\T$, the value $\delta(\T)$ is by definition the length of the shortest double line
 in the picture of $\T$.
In Figure~\ref{fig:T1}, the 1st protocell is depicted: the neighbors are (counterclockwise) $x_1=1$, $x_2=2+2\gamma+\gamma^2=\gamma^{-2}$,
 $x_3=1+\gamma+\gamma^2=\gamma^{-1}$ and $x_4=2+\gamma+\gamma^2=1+\gamma^{-1}$.
The closest point of these to $0$ is $x_2=\gamma^{-2}$.
For the last protocell, the closest point is analogously $-\gamma^2$.
Therefore $\delta(\T)=\abs{\gamma^{-2}}=\gamma'$ for the first and the last protocell.
For the rest of the protocells, the closest point to $0$ is $\pm(1+\gamma+\gamma^2)=\pm\gamma^{-1}$,
 and therefore $\delta(\T)=\abs{\gamma^{-1}}=\sqrt{\gamma'}=1/\sqrt\beta$.
Since $\ell_2(\gamma)$ is the minimum of all $\delta(\T)$, we get that
\[
	\ell_2(\gamma)=\gamma'\approx0.544
.\]

\begin{figure}
\includegraphics[width=0.95\linewidth]{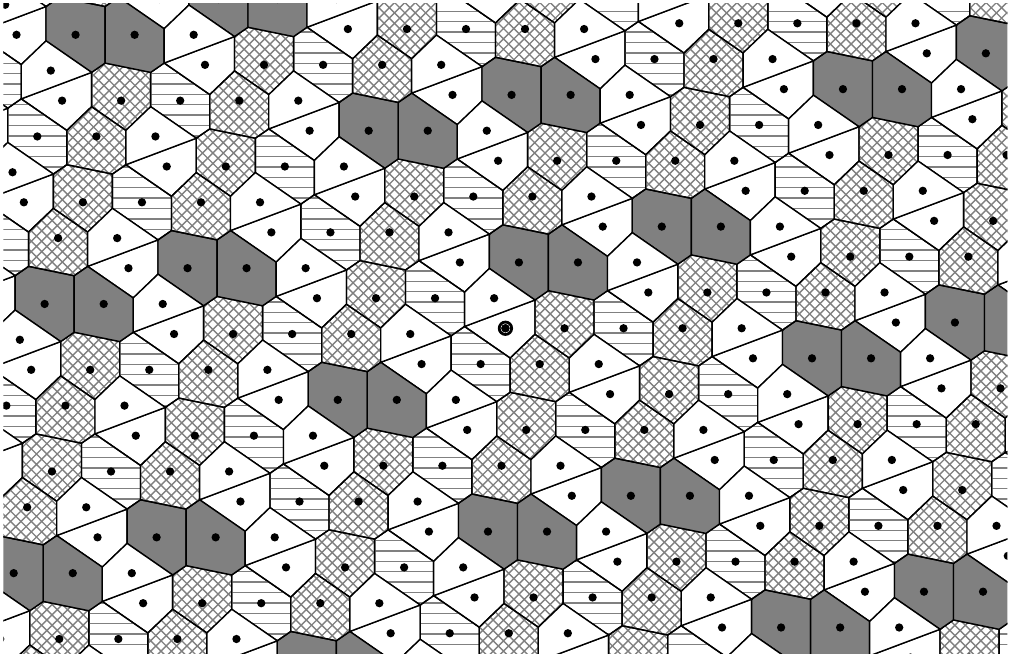}
\caption{Part of the Voronoi tessellation of
 $X^2(\gamma)=\Sigma(\Omega)$, where $\Omega=[0,\frac{2}{1-\gamma'})$
 and $\gamma=\gamma_T$ is the complex Tribonacci constant.
The point $0$ is highlighted.
}\label{fig:voronoi}
\end{figure}

\begin{figure}
\includegraphics{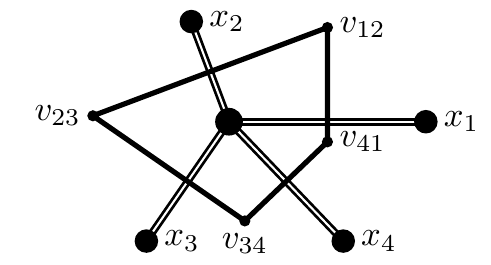}
\caption{One of the protocells of $X^2(\gamma)$.}
\label{fig:T1}
\end{figure}

To compute $L_2(\gamma)$, we first determine the value of $\Delta(\T)$ for all protocells.
By definition, $\Delta(\T)$ is twice the maximal distance from $0$ to the vertices of $\T$.
The vertices of the protocell are points $v_{ij}$ satisfying that $\abs{x_i-v_{ij}}=\abs{x_j-v_{ij}}=\abs{0-v_{ij}}$,
 see Figure~\ref{fig:T1}.
This is the same condition as \eqref{eq:vij-cond}, thus the points $v_{ij}$ are given by~\eqref{eq:vij-xij}.
Therefore we have
\begin{gather*}
	\abs{v_{12}}=\frac12\abs[\Big]{\frac{\gamma^{-2}(1-\gamma^{-2})}{\Im(\gamma^{-2})}}
	\approx 0.692
,\quad
	\abs{v_{23}}=\frac12\abs[\Big]{\frac{\gamma^{-2}(1-\gamma^{-1})}{\Im(\gamma^{-1})}}
	\approx 0.692
,\\
	\abs{v_{34}}=\abs{v_{41}}=\frac12\abs[\Big]{\frac{\gamma^{-1}(1+\gamma^{-1})}{\Im(\gamma^{-1})}}
	\approx 0.510
.\end{gather*}
Numerically, it seems that the first two values are equal.
To see that this is true, we only have to check that $\abs{1+\gamma^{-1}}=2\abs{\Re(\gamma^{-1})}$,
 because $\frac{\Im(z^2)}{\Im z}=2\Re z$ for any non-real $z\in\C$.
Since $\gamma^{-1}$ and $\conj\gamma^{-1}$ are the Galois conjugates of $\beta$ root of $Y^3-Y^2-Y-1$, 
 we have $\gamma^{-1}\conj\gamma^{-1}=1/\beta$ and $\gamma^{-1}+\conj\gamma^{-1}=1-\beta$ by Vieta's formulas.
Now we easily verify that the numbers $\abs{1+\gamma^{-1}}^2=(1+\gamma^{-1})(1+\conj\gamma^{-1})$
 and $4\abs{\Re(\gamma^{-1})}^2=(\gamma^{-1}+\conj\gamma^{-1})^2$ are equal.
We can further simplify
\[
	\abs{v_{23}}^2
	= \frac14\frac{\gamma^{-2}\conj\gamma^{-2}(1-\gamma^{-1})(1-\conj\gamma^{-1})}
		{\bigl(\frac{1}{2\ii}(\gamma^{-1}-\conj\gamma^{-1})\bigr)^2}
	= \beta\frac{\beta^2-1}{3\beta^2-1}
,\]
 because we see that the left-hand side is a symmetric rational function in $\gamma^{-1},\conj\gamma^{-1}$,
 therefore Vieta's formulas can be used to rewrite it in $\beta$'s.

Whence, for the 1st protocell, the maximal distance is $\Delta(\T)=2\abs{v_{23}}$.
It turns out that this is the value of $\Delta(\T)$ for all the protocells of $\Sigma(\Omega)$.
Therefore $L_2(\gamma)=\Delta(\T(x))$ for all $x\in X^2(\gamma)$ and the value is
\[
	L_2(\gamma)
	= 2\sqrt{\beta\frac{\beta^2-1}{3\beta^2-1}}\approx1.384
.\]
\end{example}

\begin{example}\label{ex:one-tribo-alt}
Let us give one more example.
We fix the same $\gamma=\gamma_T$ as before
 and we take $c=(\gamma')^{-2}=\beta^2$.
Then $p=2$ and $k=0$ satisfy the hypothesis of Lemma~\ref{lem:L}.
Therefore
\[
	L = \abs[\Big]{\frac{\gamma^2(\gamma-1)}{\Im\gamma}} \approx 3.4531
\]
 satisfies \eqref{eq:good-L}.
In this case, $\Xi$ is of size 40.
Figure~\ref{fig:one-tribo-alt} denotes the result of Algorithm~\ref{algor:Xi}.
We get $7$ different protocells.
The 4th one has $\delta(\T)=1$, while all the other ones have $\delta(\T)=\sqrt{\gamma'}$.
The value of $\Delta(\T)$ is equal to $2\sqrt{\beta\frac{\beta^2-1}{3\beta^2-1}}\approx1.384$
 for all of them.

\begin{figure}
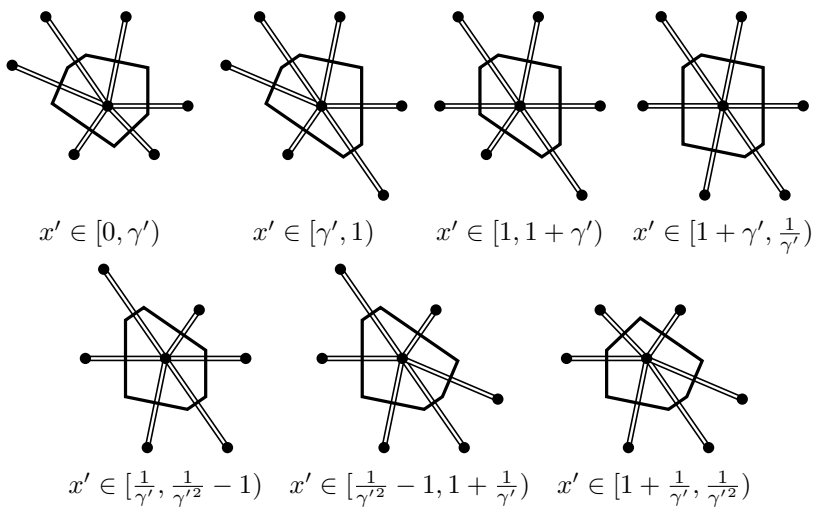

\includeW{1}{0,\gamma'}
\includeW{2}{\gamma',1}
\includeW{3}{1,1+\gamma'}
\includeW{4}{1+\gamma',\frac{1}{\gamma'}}
\includeW{5}{\frac{1}{\gamma'},\frac{1}{{\gamma'}^2}-1}
\includeW{6}{\frac{1}{{\gamma'}^2}-1,1+\frac{1}{\gamma'}}
\includeW{7}{1+\frac{1}{\gamma'},\frac{1}{{\gamma'}^2}}
\caption{Voronoi protocells (the palette) for
 $\Sigma(\Omega)$, where $\Omega=[0,\frac{1}{{\gamma'}^2})$
 and $\gamma=\gamma_T$ is the complex Tribonacci constant.
}\label{fig:one-tribo-alt}
\end{figure}

We can now run Algorithm~\ref{algor:Xi} again, using the better upper bound on $\Delta(\T)$, namely $L\approx1.384$.
This can save us a lot of steps of the algorithm: The size of $\Xi$ reduces from $40$ to $8$,
 so reduces the number of the steps.
We will use this improved value of $L$ in Section~\ref{sect:tribo}, where we study the sets
 $\Sigma([0,c))$ for all $c>0$.
\end{example}

In the two examples, we listed the palettes of $\Sigma([0,c))$ for two different values $c=\beta^2+1$ and $c=\beta^2$.
Two protocells appears in both lists.
The natural question to ask is: For which values of $c$, a given prototile occurs in the palette of $\Sigma([0,c))$?
Using Lemma~\ref{lem:L-eq}, this question can be transformed to an easier one:
 for which values of $c$, a specific $L$-patch occurs in $\Sigma([0,c))$.
Since we now treat $L$-patches for varying $c$, we denote them $\calP_L^c(x)$,
 and for convenience we denote $\bigl(\calP_L^c(x)\bigr)'\eqdef\set{z'}{z\in\calP_L^c(x)}$.
 
\begin{lemma}\label{lem:234}
Let $c_0>0$ be fixed, $c\in(0,c_0)$ and $L>0$.
Denote $-c_0\defeq w_0<w_1<\dots<w_{n-1}<w_n\eqdef c_0$ the sequence of numbers such that
\begin{equation}\label{eq:Pi}
 W\eqdef\{w_1,w_2,\dots, w_{n-1}\} = \set[\big]{z'\in\Z[\gamma']}{\abs z\leq L\text{ and }z'\in(-c_0,c_0)}
.\end{equation}
Then
\begin{enumerate}\romanenumi % we enforce roman numbering

\item
 For all $x\in\Sigma([0,c))$ we have
 \[
	\calP_L^c(x)\subseteq\set{z\in\Z[\gamma]}{z'\in W}
 .\]

\item\label{enum:IJ}
 For all $x\in\Sigma([0,c))$ there exist $i,k\in\N$, $1\leq i\leq k\leq n-1$, such that
 \[
	\{w_i,w_{i+1},\dots,w_k\}=\bigl(\calP_L^c(x)\bigr)'
 .\]

\item\label{enum:UV}
 Let $1\leq i\leq k\leq n-1$.
 Then a finite set $\{w_i,w_{i+1},\dots,w_k\}$ containing $0$ equals $(\calP_L^c(x))'$ for some $x\in\Sigma([0,c))$
  if and only if
 \begin{equation}\label{eq:UV}
	w_k-w_i < c < w_{k+1}-w_{i-1}
 .\end{equation}

\item\label{enum:sym}
 For all $x\in\Sigma([0,c))$ there exists $y\in\Sigma([0,c))$ such that $\calP_L^c(y) = -\calP_L^c(x)$.

\end{enumerate}
\end{lemma}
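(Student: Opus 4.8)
The plan is to translate everything into the Galois-conjugate picture, where $\calP_L^c(x)$ is described by \eqref{eq:LC} as
$\calP_L^c(x)=\set{z\in\Z[\gamma]}{x'+z'\in[0,c)\text{ and }\abs z\leq L}$.
For part~(i), note that if $z\in\calP_L^c(x)$ then $\abs z\leq L$ and, since $x'\in[0,c)\subseteq[0,c_0)$ and $x'+z'\in[0,c)$, we get $z'\in(-c_0,c_0)$; hence $z'\in W$ by the very definition \eqref{eq:Pi} of $W$. Part~(ii) will follow from the fact that the set of admissible $z'$ is an \emph{interval} of the grid: the constraint $x'+z'\in[0,c)$ says $z'\in[-x',c-x')$, so $\bigl(\calP_L^c(x)\bigr)'$ is exactly $W\cap[-x',c-x')$, which is a block of consecutive $w_j$'s. (One must check this block is nonempty — but $0\in\calP_L^c(x)$ always, since $x\in\Sigma([0,c))$ means $x'\in[0,c)$ — and that it does not reach the endpoints $w_0=-c_0$ or $w_n=c_0$, which holds because $[-x',c-x')\subseteq(-c_0,c_0)$ as $0\le x'<c<c_0$.)

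For part~(iii), I would argue both directions using the description $\bigl(\calP_L^c(x)\bigr)'=W\cap[-x',c-x')$. Given $x$ realizing the block $\{w_i,\dots,w_k\}$ with $0$ among them, we have $w_i\geq -x'$, $w_k< c-x'$, and the neighbours satisfy $w_{i-1}<-x'$, $w_{k+1}\geq c-x'$ (using $w_0=-c_0\le -x'$ and $w_n=c_0> c-x'$ to handle the boundary indices if $i=1$ or $k=n-1$); combining, $w_k+x'<c$ and $c\le w_{k+1}+x'$, as well as $-x'\le w_i$ and $w_{i-1}<-x'$, which chain to $w_k-w_i<c$ and $c<w_{k+1}-w_{i-1}$. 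Conversely, assuming \eqref{eq:UV} holds and $0\in\{w_i,\dots,w_k\}$, I need to produce $x\in\Sigma([0,c))$, i.e.\ an element $x'\in\Z[\gamma']\cap[0,c)$ with $W\cap[-x',c-x')=\{w_i,\dots,w_k\}$. The condition to impose is $-x'\in(w_{i-1},w_i]$ and $c-x'\in(w_k,w_{k+1}]$, i.e.\ $x'$ must lie in the interval $I\eqdef[-w_i,-w_{i-1})\cap(c-w_{k+1},c-w_k]$; inequality \eqref{eq:UV} is precisely what makes $I$ nonempty, and since $0\in\{w_i,\dots,w_k\}$ forces $w_i\le 0\le w_k$, one checks $0\ge -w_i$ and the interval $I$ meets $[0,c)$, so $0\le x'<c$. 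The one genuine subtlety — and the step I expect to be the main obstacle — is that $I$ must contain a point of the grid $\Z[\gamma']$, not merely be a nonempty real interval: here I would invoke that $\Z[\gamma']$ is dense in $\R$ (because $\gamma$ is a cubic complex Pisot number, so $\Q(\gamma')$ is a real cubic field and $\Z[\gamma']$ is dense), hence every nonempty open subinterval of $I$ contains infinitely many such points; a small care is needed if $I$ degenerates to a half-open interval with equal endpoints, but the strict inequalities in \eqref{eq:UV} prevent this.

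Finally, part~(iv) is immediate from the symmetry $z\mapsto -z$: given $x\in\Sigma([0,c))$, by part~(iii) the block $\bigl(\calP_L^c(x)\bigr)'=\{w_i,\dots,w_k\}$ satisfies \eqref{eq:UV}; because $W$ is symmetric about $0$ (as $\abs{z}=\abs{-z}$ and $z'\mapsto -z'$ preserves the conditions in \eqref{eq:Pi}), we have $w_{n-j}=-w_j$ for all $j$, so the reflected block $\{-w_k,\dots,-w_i\}=\{w_{n-k},\dots,w_{n-i}\}$ also contains $0$ and also satisfies \eqref{eq:UV} (the inequality is symmetric under $(i,k)\mapsto(n-k,n-i)$). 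By part~(iii) there is $y\in\Sigma([0,c))$ realizing this reflected block, and then $\calP_L^c(y)=\set{z\in\Z[\gamma]}{z'\in\{-w_k,\dots,-w_i\}}=-\calP_L^c(x)$, since the map $z\mapsto -z$ is a bijection of $\Z[\gamma]$ that acts as $z'\mapsto -z'$ on conjugates. This completes the proof.
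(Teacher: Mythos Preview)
Your proof is correct and follows essentially the same route as the paper: both arguments hinge on the description $\bigl(\calP_L^c(x)\bigr)'=W\cap[-x',c-x')$, derive the inequalities \eqref{eq:UV} from it, and invoke density of $\Z[\gamma']$ in $\R$ for the converse in part~(iii), then use the central symmetry $w_j=-w_{n-j}$ for part~(iv). One small slip: in the converse of (iii) you wrote ``$0\ge -w_i$'' where you mean $-w_i\ge 0$ (since $w_i\le 0$), but the intended argument is clear and matches the paper's.
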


\begin{proof}
\begin{enumerate}\romanenumi % we enforce roman numbering

\item
 As $\Sigma([0,c))\subseteq \Sigma([0,c_0))$ we have $\calP_L^c(x)\subseteq \calP_L^{c_0}(x)$ and the statement follows
  from the relation~\eqref{eq:LC}.

\item
 Let $i$ and $k$ be the indices for which
 \[
	w_i=\min\bigl(\calP_L^c(x)\bigr)'
 \quad\text{and}\quad
	w_k=\max\bigl(\calP_L^c(x)\bigr)'
 .\]
 According to the relation~\eqref{eq:LC} we get
 \begin{equation}\label{eq:ST}
	0\leq x'+w_i
 \quad\text{and}\quad
	x'+w_k<c
 .\end{equation}
 Consider $w_j$ for $j\in\N$, $i<j<k$.
 Then $w_i<w_j<w_k$, whence $0\leq w_j+x'<c$.
 This implies that $w_j$ belongs to $\bigl(\calP_L^c(x)\bigr)'$ as well.

\item ($\Rightarrow$)
 Because of \eqref{eq:ST}, we have $w_k-w_i<c$.
 Since $w_{i-1}$ and $w_{k+1}$ do not belong to $\bigl(\calP_L^c(x)\bigr)'$,
  we have $x'+w_{i-1}<0$ and $x'+w_{k+1}\geq c$.
 Hence $w_{k+1}-w_{i-1}>c$.

\addtocounter{enumi}{-1} % we have two items (iii) in the proof
\item ($\Leftarrow$)
 Let $w_{i-1}, w_i, w_k, w_{k+1}$ satisfy \eqref{eq:UV}.
 As $\Z[\gamma']$ is dense in $\R$, there exists $u\in(w_{i-1},w_i)$ such that
  $u\in\Z[\gamma']$ and $u+c\in(w_k,w_{k+1})$.
 Put $x'\eqdef -u$.
 Then
 \[
	x'+w_{i-1}<0<x'+w_i<x'+w_k<c<x'+w_{k+1}
 .\]
 Since $w_i\leq0\leq w_k$, we have that $0<x'<c$, therefore $x\in\Sigma([0,c))$.
 We conclude from item~(\ref{enum:IJ}) that $\{w_i,w_{i+1},\dots,w_k\}=\bigl(\calP_L^c(x)\bigr)'$.

\item
 Since $W$ is a centrally symmetric set, i.e, $W=-W$, we have that $w_j=w_{n-j}$ for all $0\leq j\leq n$.
 Then \eqref{eq:UV} is equivalent to
 \[
 	w_{n-i} - w_{n-k} < c < w_{n-i+1} - w_{n-k-1}
 .\]
 According to item (\ref{enum:UV}), the set $\{w_i,\dots,w_k\}$ is an $L$-patch for some $x\in\Sigma([0,c))$
  if and only if $\{-w_k,\dots,-w_i\}$ is an $L$-patch for some $y\in\Sigma([0,c))$.
\qedhere

\end{enumerate}
\end{proof}

Inequality \eqref{eq:UV} answers our question.
To any $L$-patch, we can assign an open interval such that this patch occurs in $\Sigma([0,c))$
 if and only if $c$ lies in this interval.
This fact has an important consequence:
 for any given set of $L$-patches,
 the range of~$c$ such that these patches are precisely the $L$-patches of $\Sigma([0,c))$
 is an intersection of intervals and complements of intervals.
As before, the result on $L$-patches implies the following result on palettes.

\begin{corollary}\label{corol:Theta}
Let $b_0,c_0\in\R$ satisfy that $0<b_0<c_0$.
Denote by $\Pal(\Omega)$ the palette of $\Sigma(\Omega)$, i.e., the set of all protocells of $\Sigma(\Omega)$.
Then there exists a finite sequence $b_0\defeq\theta_0<\theta_1<\dots<\theta_{N-1}<\theta_N\eqdef c_0$
 such that the mapping
\[
	c\mapsto\Pal\bigl([0,c)\bigr)
\]
 is constant on each of the intervals $(\theta_{j-1},\theta_j)$ for $j=1,\dots,N$.
\end{corollary}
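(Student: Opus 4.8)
The plan is to combine Lemma~\ref{lem:234}(\ref{enum:UV}), which attaches to every conceivable $L$-patch an open interval of values of $c$ on which that patch occurs, with the fact that there are only finitely many candidate $L$-patches, and with Lemma~\ref{lem:L-eq}, which says that a protocell is completely determined by the corresponding $L$-patch.

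First I would fix, once and for all, a single constant $L>0$ satisfying \eqref{eq:good-L} for \emph{every} window $\Omega=[0,c)$ with $c\in[b_0,c_0]$. Lemma~\ref{lem:L} produces for a given $c$ the value $L(c)=\abs\gamma^{k(c)}M$, where $M$ is the maximum in \eqref{eq:L} and does not depend on $c$, and $k(c)$ is the least integer with $(\gamma')^{k(c)}<c/2$. Since $0<\gamma'<1$, the integer $k(c)$ is non-increasing in $c$, and since $\abs\gamma=(\gamma')^{-1/2}>1$, so is $\abs\gamma^{k(c)}$; hence $L(c)\leq L(b_0)$ for all $c\in[b_0,c_0]$, and $L\eqdef L(b_0)$ works for the whole range. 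With this $L$ and with $c_0$ I form the data $-c_0=w_0<w_1<\dots<w_{n-1}<w_n=c_0$ of Lemma~\ref{lem:234}; note that it depends only on $L$ and $c_0$, not on $c$, and that $0\in W$ (take $z=0$).

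Next, to each pair of indices $(i,k)$ with $1\leq i\leq k\leq n-1$ and $w_i\leq 0\leq w_k$ I attach the finite set $P(i,k)\eqdef\set{z\in\Z[\gamma]}{z'\in\{w_i,w_{i+1},\dots,w_k\}}$; this is well-defined because the Galois map $z\mapsto z'$ is a bijection $\Z[\gamma]\to\Z[\gamma']$, and every element of $P(i,k)$ automatically has $\abs z\leq L$ by the definition of $W$. Since every $x\in\Sigma([0,c))$ satisfies $0\in\calP_L^c(x)$ (because $x'\in[0,c)$ and $\abs 0\leq L$, by \eqref{eq:LC}), Lemma~\ref{lem:234}(\ref{enum:IJ}) shows that the only $L$-patches that can occur are the sets $P(i,k)$ with $w_i\leq 0\leq w_k$, and Lemma~\ref{lem:234}(\ref{enum:UV}) shows that $P(i,k)$ is realized — i.e.\@ equals $\calP_L^c(x)$ for some $x\in\Sigma([0,c))$ — exactly when
\[
	c\in J_{i,k}\eqdef\bigl(w_k-w_i,\;w_{k+1}-w_{i-1}\bigr)
.\]
By Lemma~\ref{lem:L-eq} the protocell $\T(x)-x$ depends only on $\calP_L^c(x)$, hence equals a fixed polygon $\Pi(i,k)$, independent of $c$, whenever $\calP_L^c(x)=P(i,k)$. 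Consequently
\[
	\Pal\bigl([0,c)\bigr)=\set[\big]{\Pi(i,k)}{1\leq i\leq k\leq n-1,\ w_i\leq 0\leq w_k,\ c\in J_{i,k}}
.\]

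Finally I would take $E$ to be the finite set of all endpoints $w_k-w_i$ and $w_{k+1}-w_{i-1}$ of the intervals $J_{i,k}$, list $E\cap(b_0,c_0)$ as $\theta_1<\dots<\theta_{N-1}$, and set $\theta_0\eqdef b_0$, $\theta_N\eqdef c_0$. No point of $E$ lies in the interior of any $(\theta_{j-1},\theta_j)$, so each such interval lies entirely inside or entirely outside every $J_{i,k}$; thus for $c$ running through $(\theta_{j-1},\theta_j)$ the condition $c\in J_{i,k}$ has a constant truth value for every pair $(i,k)$, and by the displayed formula $\Pal([0,c))$ is constant on $(\theta_{j-1},\theta_j)$, as asserted. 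The bookkeeping is routine; the two points that deserve care are the first step — securing one constant $L$ valid for the whole parameter interval $[b_0,c_0]$ rather than for a single $c$ — and the observation that the polygon $\Pi(i,k)$ attached to an index pair really is independent of $c$, which is exactly what Lemma~\ref{lem:L-eq} guarantees.
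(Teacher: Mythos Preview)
Your proof is correct and follows essentially the same route as the paper's: both fix a single $L$ valid for every window $[0,c)$ with $c\in[b_0,c_0)$, invoke Lemma~\ref{lem:234}(\ref{enum:UV}) to see that each potential $L$-patch occurs on an open interval of values of~$c$ with endpoints in $W-W$, and then cut $(b_0,c_0)$ at the finitely many such endpoints, using Lemma~\ref{lem:L-eq} to pass from $L$-patches to protocells. The paper phrases the same argument contrapositively (different palettes $\Rightarrow$ different $L$-patch collections $\Rightarrow$ $c$ and $d$ separated by a point of $(W-W)\cap(b_0,c_0)$), and defines the cut set directly as $\Theta=(W-W)\cap(b_0,c_0)$; your endpoint set $E\cap(b_0,c_0)$ is possibly a bit larger but serves the same purpose. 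You are also more explicit than the paper about why one constant $L=L(b_0)$ works for the whole parameter range.
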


\begin{proof}
Consider $L$ satisfying \eqref{eq:good-L} for $\Sigma=\Sigma([0,b_0))$.
For $W$ given by \eqref{eq:Pi} find $\theta_1<\dots<\theta_{N-1}$ such that
\begin{equation}\label{eq:Theta}
	\Theta\eqdef(W-W)\cap(b_0,c_0) = \{\theta_1,\dots,\theta_{N-1}\}
.\end{equation}
Let $c,d\in(b_0,c_0)$ and suppose that the palette of $\Sigma([0,c))$ does not coincide with the palette of $\Sigma([0,d))$.
Without loss of generality there exists an $L$-patch of $x\in\Sigma([0,c))$ that is not an $L$-patch of any $y\in\Sigma([0,d))$.
This means that $c$ satisfies inequalities \eqref{eq:UV} for some indices $i,k$, whereas $d$ does not satisfy them.
This fact implies that $c$ and $d$ are separated by a point $w_k-w_i\in W-W$.
\end{proof}

The previous corollary says that there exist only finitely many palettes for $\Sigma([0,c))$ with $c\in[b_0,c_0)$.
The following algorithm determines them:

\pagebreak % to prevent a line break inside the algorithm that follows

\begin{algor}\label{algor:Theta}\leavevmode
\begin{enumerate}
\item[\textbullet]
 Input: $\gamma$ satisfying \eqref{eq:cmplx-gamma}, $0<b_0<c_0$, $L$ satisfying \eqref{eq:good-L} for $\Omega=[0,b_0)$,
  e.g.\@ given by \eqref{eq:L}.
\item[\textbullet]
 Output: All possible palettes $\Pal(\Omega)$ of $\Sigma(\Omega)$ for $\Omega=[0,c)$ and $b_0\leq c<c_0$.
\item Compute the set $\Theta=\{\theta_1<\dots<\theta_{N-1}\}$ given by \eqref{eq:Theta}.
\item \label{algor:Theta:enum:for} Using Algorithm \ref{algor:Xi}, compute the palettes $\Pal(\Omega)$ for all $\Omega=[0,c)$ with
\(
	c=b_0, \tfrac{b_0+\theta_1}{2}, \theta_1,
	\dots, \tfrac{\theta_{N-2}+\theta_{N-1}}{2}, \theta_{N-1}, \tfrac{\theta_{N-1}+c_0}{2}
.\)
\item Remove possible duplicates in the list of palettes.
\end{enumerate}
\end{algor}

In Corollary~\ref{corol:Theta} and Algorithm~\ref{algor:Theta}, the assumption $b_0>0$ is very important,
 because there exist infinitely many $c\in(0,c_0)$ with different palettes.
However, these palettes cannot differ too much.
In fact, the self-similarity property (see~Proposition~\ref{prop:self-sim}) guarantees that the palette for the window $[0,\gamma'c)$
 differs from the palette for $[0,c)$ only by a scaling factor $\gamma$.
Therefore the knowledge of the palettes for $c\in[\gamma'c_0,c_0)$
 is sufficient for the description of all palettes.

\begin{remark}\label{rem:sym}
As a consequence of item (\ref{enum:sym}) of Lemma~\ref{lem:234},
 the list of $L$-patches for $\Sigma([0,c))$ is invariant under rotation by 180\degree\@.
Therefore the palette $\Pal([0,c))$ is invariant as well.
Figures~\ref{fig:one-tribo} and~\ref{fig:one-tribo-alt} witness this phenomenon.
\end{remark}

\section{Complex Tribonacci number exploited. Proof of Theorem~\ref{thm:tribo}}\label{sect:tribo}

In this section, we describe the details of the proposed workflow on an example --- the complex Tribonacci base $\gamma=\gamma_T$.
We aim at the proof of Theorem~\ref{thm:tribo}.
As usual, $\beta\eqdef \gamma\conj\gamma=1/\gamma'$.
The theorem will be proved by combining the self-similarity property in Proposition~\ref{prop:self-sim}
 and the following result:

\begin{table}
\centering
\begin{tabular}{@{\,}>$c<$@{\,}
 *{2}{c@{\,}}c@{}
 *{2}{c@{\,}}c@{}
 *{1}{c@{\,}}c@{}
 *{1}{c@{\,}}c@{}
 *{0}{c@{\,}}c@{}
 *{0}{c@{\,}}c@{}
 *{0}{c@{\,}}c@{}
 *{0}{c@{\,}}c@{\,}
 }\toprule
\text{\bfseries Interval for $c$} & \multicolumn{14}{c}{\bfseries The palette of $\Sigma(\Omega)$, where $\Omega=[0,c)$}
	\\\midrule[\heavyrulewidth]
	\beta^2 &
 & & & & & &\Tvi&\Tviii& & &\Txi& & &\Txiv
	\\\midrule
	(\beta^2,2\beta) &
 & & &\Tiv& & &\Tvi&\Tviii& & &\Txi& & &\Txiv
	\\\midrule
	(2\beta,\beta+2) &
 & & &\Tiv& & &\Tvi&\Tviii& & &\Txi& &\Txiii&
	\\\midrule
	(\beta+2,\beta^2+1) &
 & & &\Tiv& & &\Tvi& &\Tix& &\Txi& &\Txiii&
	\\\midrule
	(\beta^2+1,2\beta+1) &
 &\Tii& &\Tiv&\Tv& & & &\Tix& &\Txi& &\Txiii&
	\\\midrule
	(2\beta+1,\beta^2+\beta) &
 &\Tii& &\Tiv&\Tv& & & &\Tix&\Tx& & &\Txiii&
	\\\midrule
	(\beta^2+\beta,\beta^2+2) &
\Ti&\Tii& &\Tiv&\Tv&\Tvii& & & &\Tx& & &\Txiii&
	\\\midrule
	(\beta^2+2,2\beta+2) &
\Ti&\Tii&\Tiii& &\Tv&\Tvii& & & &\Tx& & &\Txiii&
	\\\midrule
	(2\beta+2,\beta^3) &
\Ti&\Tii&\Tiii& &\Tv&\Tvii& & & &\Tx& &\Txii& &
	\\\midrule[\heavyrulewidth]
\text{\bfseries Tile}
	&$ \!\frac1\gamma\T_4\! $&$ \T_1 $&$ \frac1\gamma\T_5 $&$ \T_2 $&$ \T_3
		$&$ \frac1\gamma\T_8 $&$ \T_4 $&$ \T_5 $&$ \T_6 $&$ \T_7 $&$ \T_8
		$&$ \frac1\gamma\T_{10} $&$ \T_9 $&$ \T_{10} $
	\\\midrule
\text{\bfseries Value of $\delta$}
	& $\frac{1}{\beta}$ & $\frac{1}{\beta}$ & $\frac{1}{\beta}$ & $\frac{1}{\beta}$ & $\frac{1}{\beta}$ & $\frac{1}{\beta}$ & $\frac{1}{\sqrt{\beta}}$ & $\frac{1}{\sqrt{\beta}}$ & $\frac{1}{\sqrt{\beta}}$ & $\frac{1}{\sqrt{\beta}}$ & $\frac{1}{\sqrt{\beta}}$ & $\frac{1}{\sqrt{\beta}}$ & $\frac{1}{\sqrt{\beta}}$ & 1
	\\\midrule
\text{\bfseries Value of $\Delta$}
	& $A$ & $B$ & $A$ & $B$ & $B$ & $A$ & $B$ & $B$ & $B$ & $B$ & $B$ & $A$ & $B$ & $B$
	\\\midrule
\text{\bfseries Value of $\Deltax $}
	& $1$ & $1$ & $1$ & $1$ & $1$ & $1$ & $\sqrt{\beta}$ & $\sqrt{\beta}$ & $\sqrt{\beta}$ & $\sqrt{\beta}$ & $\sqrt{\beta}$ & $1$ & $\sqrt{\beta}$ & $\sqrt{\beta}$
	\\\midrule[\heavyrulewidth]
\end{tabular}
\caption[The protocells for the complex Tribonacci constant for an arbitrary window $\Omega$.]{
The protocells for the complex Tribonacci constant for windows $\Omega=[0,c)$ with $c\in[\beta^2,\beta^3)$.
We put $A\eqdef2\sqrt{\frac{\beta^2-1}{3\beta^2-1}}$ and $B\eqdef A\sqrt\beta$.
Each tile in the list appears rotated by 180\degree\@ as well,
 we omit these to make the table shorter; see Remark~\ref{rem:sym}.
For a~cut-point $\theta_i$, the palette is the intersection of the palettes
 for the surrounding intervals, for instance
 $\Pal([0,\beta^2+1))=\{\T_2,\T_6,\T_8,\allowbreak\T_9,\allowbreak-\T_8,-\T_6,-\T_2\}$.
}\label{tab:pal-tribo}
\end{table}

\begin{proposition}\label{prop:dD-beta2}
Let $\Omega=[0,c)$ with $c\in(\beta^2,\beta^3)$, where $\beta\eqdef1/\gamma'$ and $\gamma$ is the complex Tribonacci constant.
Denote $\Sigma\eqdef\Sigma(\Omega)$.
Then
\begin{equation}\label{eq:dD-beta2}
	\min_{x\in\Sigma} \delta\bigl(\T(x)\bigr) = 1/\beta
\quad\text{and}\quad
	\max_{x\in\Sigma} \Delta\bigl(\T(x)\bigr) = 2\sqrt\beta\sqrt{\frac{\beta^2-1}{3\beta^2-1}}
.\end{equation}
\end{proposition}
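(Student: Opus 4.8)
\textbf{Proof plan for Proposition~\ref{prop:dD-beta2}.}
The strategy is to reduce the claim for \emph{all} windows $\Omega=[0,c)$ with $c\in(\beta^2,\beta^3)$ to a single finite computation, exactly along the lines of Algorithm~\ref{algor:Theta}. First I would fix $b_0\eqdef\beta^2$ and $c_0\eqdef\beta^3$ and invoke Corollary~\ref{corol:Theta}: the map $c\mapsto\Pal([0,c))$ is constant on each of finitely many intervals $(\theta_{j-1},\theta_j)$ partitioning $(\beta^2,\beta^3)$, where the cut-points $\theta_j$ come from the set $\Theta=(W-W)\cap(\beta^2,\beta^3)$ with $W$ as in \eqref{eq:Pi}. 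For the upper bound $L$ needed to compute $W$, I would use the improved value $L\approx1.384$ obtained in Example~\ref{ex:one-tribo-alt} (which is valid for $\Sigma([0,\beta^2))$ and hence, being an upper bound for $\Delta$, remains valid as the value of $L$ in \eqref{eq:good-L} for all larger $c$ too, since enlarging the window only adds points and $\Delta$ stays bounded by the same $L$), or alternatively re-derive a valid $L$ from \eqref{eq:L}. Running Algorithm~\ref{algor:Xi} on one representative $c$ from each subinterval (and on the cut-points themselves) then produces the complete list of protocells; this is precisely the content of Table~\ref{tab:pal-tribo}.

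Next I would read off from that table the values of $\delta(\T)$ and $\Delta(\T)$ for every protocell $\T$ that occurs for some $c\in(\beta^2,\beta^3)$. The table shows that $\delta(\T)\in\{1/\beta,\,1/\sqrt\beta,\,1\}$, with the value $1/\beta$ attained (it appears for the tiles $\tfrac1\gamma\T_4,\T_1,\tfrac1\gamma\T_5,\T_2,\T_3,\tfrac1\gamma\T_8$, whose closest neighbour to $0$ is at distance $\abs{\gamma^{-2}}=1/\beta$) and $1/\beta$ being the minimum of the three possible values; hence $\min_{x\in\Sigma}\delta(\T(x))=\inf_{\T}\delta(\T)=1/\beta$ by \eqref{eq:dl}. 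Similarly the table shows $\Delta(\T)\in\{A,B\}$ with $A=2\sqrt{\tfrac{\beta^2-1}{3\beta^2-1}}$ and $B=A\sqrt\beta$; since $\beta>1$ we have $B>A$, and $B$ is attained, so $\max_{x\in\Sigma}\Delta(\T(x))=\sup_{\T}\Delta(\T)=B=2\sqrt\beta\sqrt{\tfrac{\beta^2-1}{3\beta^2-1}}$, which is exactly \eqref{eq:dD-beta2}.

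The one point requiring genuine argument rather than bookkeeping is the verification of the closed-form values of $\delta$ and $\Delta$ for the individual protocells, in particular the identity $\abs{v_{ij}}^2=\beta\,\tfrac{\beta^2-1}{3\beta^2-1}$ for the relevant vertices. I would handle this exactly as in Example~\ref{ex:one-tribo}: a vertex $v_{ij}$ is given by \eqref{eq:vij-xij}, and when the three defining points are of the shape $0$, $\gamma^{a}$, $\gamma^{b}$ with $a,b\in\{-2,-1\}$ (up to multiplication by a unit), the quantity $\abs{v_{ij}}^2$ is a symmetric rational function of $\gamma^{-1}$ and $\conj\gamma^{-1}$, so Vieta's formulas for $\beta$ as a root of $Y^3-Y^2-Y-1$ (giving $\gamma^{-1}\conj\gamma^{-1}=1/\beta$ and $\gamma^{-1}+\conj\gamma^{-1}=1-\beta$) rewrite it purely in terms of $\beta$. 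The analogous symmetric-function computation gives the $\delta$-values as moduli of the relevant $\gamma^{-j}$.

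The main obstacle, then, is not any single inequality but the sheer size of the case analysis: one must be confident the finite list in Table~\ref{tab:pal-tribo} is \emph{complete}, i.e.\ that no subinterval of $(\beta^2,\beta^3)$ or cut-point has been missed. This is guaranteed in principle by Corollary~\ref{corol:Theta} (finiteness of $\Theta$) and by the exactness of the arithmetic in $\Z[\gamma]$ noted in Example~\ref{ex:one-tribo} (numbers stored as integer triples, so no rounding errors), but I would double-check that the cut-points listed as the interval endpoints in the table, namely $\beta^2,\,2\beta,\,\beta+2,\,\beta^2+1,\,2\beta+1,\,\beta^2+\beta,\,\beta^2+2,\,2\beta+2,\,\beta^3$, indeed exhaust $\Theta=(W-W)\cap(\beta^2,\beta^3)$, and that at each cut-point the palette is the intersection of the two neighbouring palettes (as remarked after Lemma~\ref{lem:234}, since inequality \eqref{eq:UV} is strict on both sides). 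Once completeness is secured, the proposition follows by inspection.
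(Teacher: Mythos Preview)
Your plan is essentially the paper's own proof: fix $b_0=\beta^2$, $c_0=\beta^3$, use the improved $L\approx1.384$ from Example~\ref{ex:one-tribo-alt}, run Algorithm~\ref{algor:Theta} to produce Table~\ref{tab:pal-tribo}, and read off $\delta$ and $\Delta$ row by row. One small correction: the seven interior endpoints you list do \emph{not} exhaust $\Theta$; the paper finds $\#\Theta=14$, giving $30$ cases in step~\ref{algor:Theta:enum:for}, which collapse to $16$ distinct palettes ($8$ cut-points, $8$ open intervals) after duplicate removal---so your completeness check should expect extra points of $\Theta$ that turn out not to change the palette. Also, be sure to phrase the conclusion per row: for \emph{each} $c\in(\beta^2,\beta^3)$ the corresponding palette contains a tile with $\delta=1/\beta$ and a tile with $\Delta=B$, not merely that these values occur somewhere in the table.
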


\begin{proof}
We put $b_0\eqdef\beta^2$ and $c_0\eqdef\beta^3$.
In Example \ref{ex:one-tribo-alt} we have shown that
 $L=2\sqrt{\beta}\sqrt{\frac{\beta^2-1}{3\beta^2-1}}\approx1.384$
 satisfies \eqref{eq:good-L} for $\Omega=[0,b_0)$.
Using this $L$, we run Algorithm \ref{algor:Theta}.
The first step of the algorithm computes the set $\Theta$ defined by~\eqref{eq:Theta}.
This $\Theta$ has $14$ elements, they are drawn in the following picture:
\[
	\includegraphics{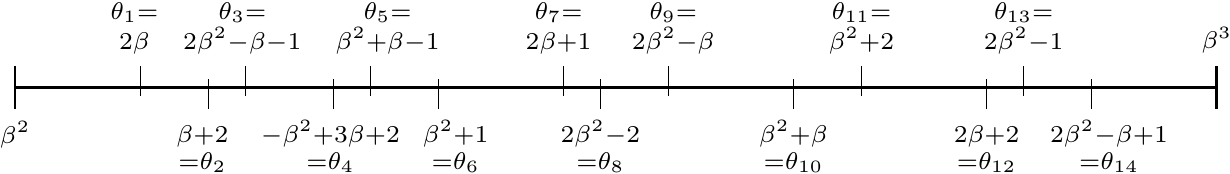}
\]
The number of cases in step \ref{algor:Theta:enum:for} of the algorithm is then $30$.
This means that we have to run Algorithm \ref{algor:Xi} exactly $30$ times to obtain all possible palettes.
Amongst the $30$ cases mentioned above, there are some duplicates, and we end up with only $16$ cases:
 $8$ cases correspond to cut-points $\theta_0,\theta_1,\theta_2,\theta_6,\theta_7,\theta_{10},\theta_{11},\theta_{12}$,
 the other $8$ cases correspond to the open intervals between the cut-points.
Moreover, we observe that for each cut-point $\theta_i$, the palette $\Pal([0,\theta_i))$
 is the intersection of the palettes of the two surrounding intervals.
All the palettes for the intervals are depicted in~Table~\ref{tab:pal-tribo}.

At the bottom of the table, the values of $\delta(\T)$ and $\Delta(\T)$ are written out
 for each protocell.
It turns out that every row of the table but the special case $c=\beta^2$
 has the minimal value of $\delta$ equal to $1/\beta\approx0.5437$ and the maximal value of $\Delta$ equal to
 $2\sqrt\beta\sqrt{\frac{\beta^2-1}{3\beta^2-1}}\approx 1.3843$.
\end{proof}

We recall that two of the runs of Algorithm~\ref{algor:Xi}, for $c=\frac{2}{1-\gamma'}=\beta^2+1\in\Theta$, i.e., for $X^2(\gamma)$, and for $c=\beta^2$ are explained in Examples~\ref{ex:one-tribo}
 and \ref{ex:one-tribo-alt} (cf.~also Figures~\ref{fig:one-tribo} and \ref{fig:one-tribo-alt}).
We have drawn a part of the Voronoi tessellation of $X^2(\gamma)$ in Figure~\ref{fig:voronoi}.

\begin{proof}[Proof of Theorem~\ref{thm:tribo}]
The theorem is a direct corollary of Proposition~\ref{prop:self-sim}, Theorem~\ref{thm:CnP}, Proposition~\ref{prop:dD-beta2}
 and of the following two facts:
\begin{itemize}

\item It cannot happen that $c=m/(1-\gamma')=(\gamma\conj\gamma)^k=\beta^k$ for some $m\geq1$ and $k\in\Z$.
 For, assume on the contrary that the last equation holds.
 Then $\beta^k\geq m$ and so $k\geq1$.
 Moreover, $k\geq3$, since $\gamma$ is cubic, and we have, by Galois isomorphism, that $m\gamma^k=1-\gamma$.
 The relation $\abs{m\gamma^k}\geq\abs{\gamma^3}>\abs{1-\gamma}$ yields a contradiction.

\item
If $\T$ is a Voronoi protocell in $\Sigma(\Omega)$ then $\gamma^k\T$ is a Voronoi protocell
 in $\gamma^k\Sigma(\Omega)=\Sigma((\gamma')^k\Omega)$ for any $k\in\Z$.
For any $m\in\N$ there exists $k\in\Z$ such that $(\gamma')^k\frac{m}{1-\gamma'}\in(\beta^2,\beta^3)$.
\qedhere

\end{itemize}
\end{proof}

\begin{remark}
Let us point out that for a real base $\beta$ the characteristic $L_m(\beta)$ given by~\eqref{eq:Lmb}
 is not influenced by gaps $x_{k+1}-x_k$ occurring only in a bounded piece of the real line.
Therefore in general the value $L_m(\gamma)$ as we have defined for the complex number $\gamma$
 is not the precise analogy to $L_m(\beta)$.
Nevertheless, if the set $X^m(\gamma)$ is repetitive (i.e., any patch occurs infinitely many times),
 which is our case, then omitting configurations in a bounded area of the plane plays no~role.{\looseness1\par}
\end{remark}

\begin{figure}
\centering
\includegraphics[width=0.3\linewidth]{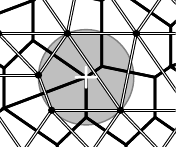}
\caption{Part of Voronoi (in solid lines) and Delone (in double lines) tessellations of $X^2(\gamma)$
 for $\gamma=\gamma_T$ the complex Tribonacci constant.
The white cross is a vertex of the Voronoi tessellation, and at the same time, it is a center of the gray circle,
 on which four points of $X^2(\gamma)$ lie.}
\label{fig:ex-delone}
\end{figure}

\section{Delone tessellation --- dual to Voronoi tessellation}\label{sect:delone}

From Voronoi tessellation we can construct its dual tessellation:
Let $\Sigma\subseteq\C$ be a Delone set.
Consider a planar graph in $\C$ whose vertices are elements of the set $\Sigma$
 and edges are line segments connecting $x,y\in\Sigma$ where
 $x$ and $y$ are \defined{neighbors}, i.e., their Voronoi cells $\T(x)$ and $\T(y)$ share a side.
This graph divides the complex plane into faces; these faces are called \defined{Delone tiles}.
The collection of Delone tiles is the \defined{Delone tessellation} of $\Sigma$.

All vertices of a Delone tile lie on a circle;
 its center is a vertex of the Voronoi tessellation.
This is illustrated in Figure~\ref{fig:ex-delone}, which shows a small part of the set $X^2(\gamma)$,
 where $\gamma$ is the complex Tribonacci constant;
 the quadrilateral is inscribed in the circle.
The white cross marks the center of the circle
 and it is a common vertex of four Voronoi cells.

The minimal distance $\inf_{x\in\Sigma} \delta(\T(x))$ is equal
 to the shortest edge in the Delone tessellation.
On the other hand, the longest edge in the Delone tessellation
 is (in~general) shorter than $\sup_{x\in\Sigma} \Delta(\T(x))$.
Therefore, for a point $x\in\Sigma(\Omega)$ we can define
\[
	\Deltax \bigl(\T(x)\bigr) \eqdef  \max \set[\big]{\abs{x-y}}{\text{$y$ is a neighbor of $x$ in $\Sigma$}}
\]
 and study its maximum over all points $x\in\Sigma$.

We can apply this to the sets $X^m(\gamma)$.
We define
\[
	L^*_m(\gamma) = \Lx_m(\gamma)\eqdef \!\sup_{x\in X^m(\gamma)}\! \Deltax \bigl(\T(x)\bigr)
\]
 if $X^m(\gamma)$ is Delone, and $\Lx_m(\gamma)=+\infty$ otherwise.
When $X^m(\gamma)$ is a cut-and-project set, we know that it has a finite local complexity
 and therefore finitely many different Delone tiles up to translation.

In the case of the complex Tribonacci base, the shapes of all Delone tiles of $X^2(\gamma)$ are depicted in Figure~\ref{fig:delone-tiles}.
From Table~\ref{tab:pal-tribo} we get the following result:

\begin{figure}
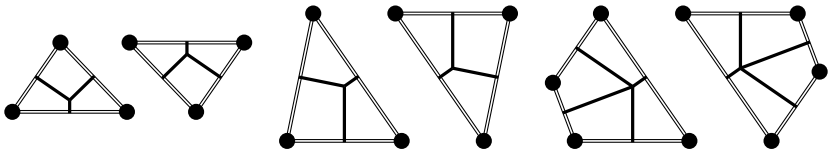

\includeD{1}\!\!\!\includeD{2}\quad\includeD{3}\!\!\!\!\!\includeD{4}\quad\includeD{5}\!\!\!\!\!\includeD{6}
\caption{Delone tiles of the set $X^2(\gamma)$, where $\gamma=\gamma_T$ is the complex Tribonacci constant.}
\label{fig:delone-tiles}
\end{figure}

\begin{theorem}\label{thm:Deltax}
With the hypothesis of Theorem~\ref{thm:tribo}, we have:
\[
	L^*_m(\gamma)=\abs{\gamma}^{3-k}
.\]
\end{theorem}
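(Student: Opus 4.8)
The plan is to mimic the structure of the proof of Theorem~\ref{thm:tribo}, replacing Proposition~\ref{prop:dD-beta2} by its Delone analogue and then exploiting self-similarity. Concretely, the first step is to establish the following claim: for every window $\Omega=[0,c)$ with $c\in(\beta^2,\beta^3)$, one has $\max_{x\in\Sigma(\Omega)}\Deltax(\T(x))=\sqrt\beta$. This is exactly the information recorded in the penultimate row of Table~\ref{tab:pal-tribo}: the values of $\Deltax$ for the fourteen protocells $\T_1,\dots,\T_{10}$ (and their rotations by $180\degree$, which by Remark~\ref{rem:sym} have the same $\Deltax$) are either $1$ or $\sqrt\beta$, and every interval row of the table — except the degenerate case $c=\beta^2$ — contains a protocell with $\Deltax=\sqrt\beta$. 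So the claim follows from reading off the table produced by Algorithm~\ref{algor:Theta}, together with the observation that a neighbour $y$ of $x$ in $\Sigma(\Omega)$ corresponds to an edge of the Voronoi protocell $\T(x)-x$, so $\Deltax(\T(x))$ depends only on the protocell; the length of the longest such edge (the longest double line in the pictures of Figures~\ref{fig:one-tribo} and~\ref{fig:one-tribo-alt}) is what the table lists. I should also check the boundary/cut-point values $c=\theta_i$, where the palette is the intersection of the two neighbouring palettes; since $\sqrt\beta$ is attained at the protocells $\T_6,\T_8$ which sit in almost every palette (in particular in $\{\T_2,\T_6,\T_8,\T_9,-\T_8,-\T_6,-\T_2\}=\Pal([0,\beta^2+1))$), the maximum $\sqrt\beta$ persists at the cut-points inside $(\beta^2,\beta^3)$ as well.

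The second step transfers this to arbitrary $m$. By Theorem~\ref{thm:CnP}, $X^m(\gamma)=\Sigma(\Omega)$ with $\Omega=[0,c)$, $c=m/(1-\gamma')$. By Proposition~\ref{prop:self-sim}, $\gamma^k\Sigma(\Omega)=\Sigma((\gamma')^k\Omega)=\Sigma([0,(\gamma')^kc))$, and a Voronoi protocell $\T$ of $\Sigma(\Omega)$ is scaled to the protocell $\gamma^k\T$ of the rescaled set; in particular $\Deltax(\gamma^k\T)=\abs\gamma^k\,\Deltax(\T)$, and the neighbour relation is preserved. Exactly as in the proof of Theorem~\ref{thm:tribo}, for every $m\in\N$ there is a (unique) $k\in\Z$ with $(\gamma')^k c=(\gamma')^k\frac{m}{1-\gamma'}\in(\beta^2,\beta^3)$, and this $k$ is the one described in the statement of Theorem~\ref{thm:tribo}: the condition $m\geq(1-\gamma')\beta^k$ rearranges to $(\gamma')^k\frac{m}{1-\gamma'}\geq\beta^{2k}\cdot\beta^{-k}$... more carefully, $(\gamma')^k c\in(\beta^2,\beta^3)$ says $\beta^{-k}\cdot\frac{m}{1-\gamma'}\in(\beta^2,\beta^3)$, i.e. $m\in\bigl((1-\gamma')\beta^{k+2},(1-\gamma')\beta^{k+3}\bigr)$, which after shifting the index matches ``$k$ greatest with $m\geq(1-\gamma')\beta^k$'' up to the same relabelling already used for $\ell_m$ and $L_m$ in~\eqref{eq:tribo}. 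I must also invoke the first bullet of the proof of Theorem~\ref{thm:tribo}: $c=m/(1-\gamma')$ is never equal to a power $\beta^j$, so the endpoint case $c=\beta^2$ of Table~\ref{tab:pal-tribo} never arises for a genuine $X^m(\gamma)$, and the maximum is genuinely $\sqrt\beta$ rather than the degenerate value. Combining, $L^*_m(\gamma)=\abs\gamma^k\cdot\sqrt\beta=\abs\gamma^k\cdot\abs\gamma^{3}$... here I use $\sqrt\beta=\sqrt{\abs\gamma^2}=\abs\gamma$, wait — $\beta=\abs\gamma^2$ so $\sqrt\beta=\abs\gamma$; thus $L^*_m(\gamma)=\abs\gamma^{k}\cdot\abs\gamma = \abs\gamma^{k+1}$, and with the index shift that aligns with $\ell_m(\gamma)=\abs\gamma^{-k}$ in~\eqref{eq:tribo} one gets the stated $\abs\gamma^{3-k}$. (The exponent bookkeeping must be done consistently with the convention fixed in Theorem~\ref{thm:tribo}; this is where care is needed but no real difficulty lies.)

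The main obstacle is not conceptual but verificational: the whole argument rests on the correctness of the $\Deltax$-column of Table~\ref{tab:pal-tribo}, i.e. on the claim that among all Delone edges emanating from any protocell in any window $[0,c)$, $c\in(\beta^2,\beta^3)$, the longest has squared length $\beta$ and this is attained. Establishing this rigorously means either trusting the Sage computation behind Table~\ref{tab:pal-tribo} or re-deriving, for each of the ten protocells, which $v_{ij}$-vertices are actual Voronoi vertices and hence which pairs $x_i$ are genuine neighbours — a finite but tedious case check of the type carried out by hand for the first protocell in Example~\ref{ex:one-tribo}. Once that table is accepted, the theorem is an immediate corollary in the same way Theorem~\ref{thm:tribo} is, and I would present it as such: ``This follows from Table~\ref{tab:pal-tribo} exactly as Theorem~\ref{thm:tribo} does from Proposition~\ref{prop:dD-beta2}, using that $\Deltax(\gamma^k\T)=\abs\gamma^k\Deltax(\T)$, that $\sqrt\beta=\abs\gamma$, and that $c=m/(1-\gamma')$ is never a power of $\beta$.''
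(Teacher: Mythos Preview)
Your approach is correct and is exactly what the paper does: the theorem is stated as an immediate consequence of the $\Deltax$-row of Table~\ref{tab:pal-tribo} together with the self-similarity argument already used for Theorem~\ref{thm:tribo}. Two minor points worth tightening: the case $c=\beta^2$ is in fact \emph{not} exceptional for $\Deltax$ (all four protocells there already have $\Deltax=\sqrt\beta$, unlike the situation for $\delta$), and your exponent bookkeeping becomes clean once you scale by $(\gamma')^{k-2}$ so that $(\gamma')^{k-2}c\in(\beta^2,\beta^3)$, giving $L^*_m(\gamma)=\abs\gamma^{-(k-2)}\cdot\sqrt\beta=\abs\gamma^{2-k}\cdot\abs\gamma=\abs\gamma^{3-k}$.
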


\section{Comments and open problems}\label{sect:concl}

This paper treated a family of cubic complex Pisot units $\gamma$ --- such ones that the real number $1/\gamma'$
 is positive and satisfies Property~(F).
We used the concept of cut-and-project sets to study the properties of the sets $X^m(\gamma)$.
However, there are other cases where it might be possible to use this concept:

\begin{enumerate}

\item
 We can consider a different perspective of the Tribonacci constant.
 Let $\gamma$ be the complex root of $Y^3+Y^2+Y-1$, and put $\beta\eqdef1/\gamma'$.
 Both $\gamma$ and $-\gamma$ are complex Pisot units.

 It was shown by V\'avra \cite{vavra_2013} that the real Tribonacci constant $\beta$ has the so-called Property~($-$F).
 Shortly speaking, all numbers from $I\cap\Z[-1/\beta]=I\cap\Z[\beta]$,
  where $I\eqdef(\frac{-\beta}{\beta+1},\frac{1}{\beta+1})$, have a finite expansion of the form
  $\frac{a_1}{-\beta}+\frac{a_2}{\beta^2}+\frac{a_3}{-\beta^3}+\dotsb$ with $a_j\in\{0,1\}$.
 From this, we can show that $X^m(-\gamma)$ is a cut-and-project set for arbitrary $m\geq1$.
 The idea goes along the lines of the proof of Theorem~\ref{thm:CnP}.

\item
 Consider any real Pisot unit $\beta$ of degree $n$.
 Let $\gamma=\ii\sqrt\beta$.
 Then $\gamma$ is a complex Pisot unit of degree $2n$,
  its Galois conjugates are $\conj\gamma$ and $\pm\ii\sqrt{\beta'}$ for $\beta'$ conjugates of $\beta$.

 Clearly $X^m(\gamma)=X^m(-\beta)+\ii\sqrt\beta X^m(-\beta)$.
 Therefore the Voronoi cells of $X^m(\gamma)$ are rectangles.
 Values $\ell_m(\gamma)$ and $L_m(\gamma)$ can be easily obtained from the minimal and maximal distances in $X^m(-\beta)$.
 In the case $n=2$, relations between $X^m(-\beta)$ and cut-and-project sets in dimensions $d=e=1$ were established in~\cite{mpp_preprint},
 implying that $X^m(\gamma)$ is related to cut-and-project sets in dimensions $d=e=2$.

 Let us note that Za\"imi \cite{zaimi_2004} evaluated $\ell_m(\gamma)$ for $\gamma=\ii\sqrt{\beta}$, $m=\qfl{\beta^2}$
  and $\beta>1$ the root of $Y^2-aY-a$, $a\in\N$.

\item\label{enum:non-F}
 In the cubic case, we can weaken the hypothesis of Theorem~\ref{thm:CnP}.
 For a fixed $m$, the Property~(F) can be replaced by the assumption that all numbers from $\Z[\beta]\cap[0,1)$
  have a finite $\beta$-representation over the alphabet $\{0,1,\dots,m\}$, where we denote $\beta\eqdef 1/\gamma'>1$.
 Under such an assumption, $X^m(\gamma)$ is a cut-and-project set.

 Akiyama, Rao and Steiner~\cite{akiyama_rao_steiner_2004}
  described precisely the set of purely periodic expansions of points from $\Z[\beta]$.
 They have shown that all of them are of the form $.ccc\dotsm=.c^\omega$, where $0\leq c<\qfl\beta$ and $(a+b)\mid c$.
 Since all numbers from $\Z[\beta]\cap[0,1)$ have finite or periodic $\beta$-expansions~\cite{schmidt_1980}
  (and the only periods are therefore the ones mentioned above),
  it is satisfactory to find $m_1$ such that the number $.(a+b)^\omega$ has a finite representation over the alphabet $\{0,\dots,m_1\}$.
 Under this hypothesis, all numbers from $\Z[\beta]\cap[0,1)$ have a finite representation over the alphabet $\{0,\dots,m\}$
  for all $m\geq m_1\qfl{\frac{\beta}{a+b}}$.
 We were not able to establish the hypothesis in all cases. We list some cases in Table~\ref{tab:non-F}.

\item
 Quartic Pisot units $\gamma$  with $\abs\gamma \in (1,2)$ are treated by Dombek, Mas\'akov\'a and Ziegler
  in~\cite{dombek_masakova_ziegler_2013}.
 The authors study the question of whether every element of the ring $\mathbb{Z}[\gamma]$ of integers of $\mathbb{Q}(\gamma)$
  can be written as a sum of distinct units.
 If the only units on the unit circle are $\pm1$, then the question can be interpreted as Property (F) over the alphabet $\{-1,0,1\}$.
 Therefore the concept of cut-and-project sets can be applied to these quartic bases and symmetric alphabets as well.

\end{enumerate}

\begin{table}
\begin{tabular}{>$c<$ >$c<$ >$c<$ >$l<$}\toprule
b  & a      & m_1   & \multicolumn{1}{c}{\text{\bfseries Representation of $.(a+b)^\omega$}}
\\\midrule[\heavyrulewidth]
-2 & \geq 3 & 2a-2  & .(a-3)(2a-2)(a-3)(0)(1) \\\midrule[\heavyrulewidth]
-3 & \geq 7 & 3a-6  & .(a-4)(2a-5)(3a-6)(a-7)(0)(1) \\\cmidrule{2-4}
   &    = 6 & 10    & .(2)(7)(10)(10)(0)(0)(1) \\\cmidrule{2-4}
   &    = 5 & 9     & .(0)(9)(9)(5)(0)(0)(1) \\\cmidrule{2-4}
   &    = 4 & 7     & .(0)(2)(6)(7)(0)^3(1) \\\midrule[\heavyrulewidth]
-4 & \geq 8 & 8a-11 & .(a-5)(2a-11)(8a-11)(4a-31)(a-8)(0)(1) \\\cmidrule{2-4}
   &    = 7 & 39    & .(0)(16)(39)(27)(0)^3(1) \\\cmidrule{2-4}
   &    = 6 & 47    & .(0)(3)(44)(47)(0)^4(1) \\\midrule[\heavyrulewidth]
\end{tabular}
\caption[List of pairs of $a,b$ such that $X^m(\gamma)$ is a cut-and-project set.]{List of pairs of $a,b$ such that $X^m(\gamma)$ is a cut-and-project set,
 where $\gamma$ is the non-real root of $Y^3+bY^2+aY-1$ and $m\geq m_1\qfl{\frac{1/\gamma'}{a+b}}$.}
\label{tab:non-F}
\vspace*{-3ex}
\end{table}

Let us conclude with several open questions:
\begin{enumerate}\Alphenumi % we enforce Alph numbering

\item\label{enum:Q:mF}
 Is it true that all real cubic Pisot units $\beta$ with a complex conjugate satisfy the following:
 There exists $m\in\N$ such that all numbers from $\Z[\beta]\cap[0,1)$ have finite
  $\beta$-representation over the alphabet $\{0,\dots,m\}$?

\item
 Which real cubic unit bases $-\beta$, other than minus the Tribonacci constant, satisfy Property ($-$F)?
 Which $-\beta$ satisfy the statement proposed in Question~(\ref{enum:Q:mF})?

\item
 It is well known that, in the real case, $X^m(\beta)$ is a relatively dense set in $\R_+$
  if and only if $m>\beta-1$.
 Can we state analogous result in the complex case?
 In particular, is $X^m(\gamma)$ relatively dense set in $\C$ for all $m>\abs{\gamma}^2-1$?

 Can the complex modification of the Feng's result~\cite{feng_2013} be proved,
  namely that $\ell_m(\gamma)=0$ if and only if $m>\abs\gamma^2-1$ and $\gamma$ is not a complex Pisot number?

\end{enumerate}

\section*{Acknowledgements}
We are grateful to the unknown referee, who helped correct some of our calculations,
 and whose careful reading of our paper significantly improved the presentation of the results.

We would like to thank Wolfgang Steiner for our fruitful discussions.

This work was supported by
 Grant Agency of the Czech Technical University in Prague grant SGS14/205/OHK4/3T/14,
 Czech Science Foundation grant 13-03538S,
 and ANR/FWF project ``FAN -- Fractals and Numeration'' (ANR-12-IS01-0002, FWF grant I1136).

%\bibliographystyle{amsalpha}
%\bibliography{cubic}

\end{document}